\theoremstyle{theorem}
\newtheorem{theorem}{Theorem}[section]
\theoremstyle{definition}
\theoremstyle{definition}
\theoremstyle{definition}
\theoremstyle{theorem}
\newtheorem{lemma}[theorem]{Lemma}
\theoremstyle{theorem}
\newtheorem{proposition}[theorem]{Proposition}
\theoremstyle{theorem}
\newtheorem{conjecture}[theorem]{Conjecture}
\theoremstyle{corollary}
\newtheorem{corollary}[theorem]{Corollary}
\theoremstyle{remark}
\newtheorem{remark}[theorem]{Remark}
\DeclareMathOperator{\Ker}{Ker}
\DeclareMathOperator{\Image}{Im}
\DeclareMathOperator{\rank}{rank}
\DeclareMathOperator{\Hom}{Hom}
\DeclareMathOperator{\Sym}{Sym}
\DeclareMathOperator{\ind}{ind}
\DeclareMathOperator{\ch}{ch}
\DeclareMathOperator{\Td}{Td}
\DeclareMathOperator{\Rat}{Rat}
\DeclareMathOperator{\delbar}{\overline{\partial}}
\title{The moduli space of multi-monopoles on a Riemann surface}
\author{Ollie Thakar}
\address{\parbox{\linewidth}{Department of Mathematics, Harvard University, Massachusetts, 02138}}
\email{othakar@math.harvard.edu}
\begin{document}

\begin{abstract}
    We study the moduli space of solutions to the Seiberg-Witten equations with $N$ spinors on a compact Riemann surface. These moduli spaces arise in a program to define a new enumerative invariant of 3-manifolds. They are also of independent interest in the geometry of algebraic curves, as they parameterize generalized divisors in Brill-Noether theory for higher rank vector bundles. We compute the Euler characteristic of these spaces, completing a computation initiated by Doan, and then compute their rational homology using spectral curves and techniques of Fulton and Lazarsfeld. 
\end{abstract}
\maketitle

\section{Introduction}

This article will study the moduli space of solutions to a simplified dimensional reduction of the multi-spinor Seiberg-Witten equations on a compact, connected Riemann surface $X$ of genus $g\geq1$. To state these equations, we introduce some notation which will be used throughout the paper. Let $h$ be a Riemannian metric on $X$ which is compatible with the complex structure, and let $\omega$ be the associated volume form. Let $E\to X$ be an $N$-dimensional hermitian vector bundle of degree $\int_Xc_1(E) =: D$ with a fixed unitary connection $B$. Let $L\to X$ be a hermitian line bundle of degree $d$ with gauge group $\mathscr{G} = \text{Map}(X, S^1)$ and complexified gauge group $\mathscr{G}^c = \text{Map}(X, \mathbb{C}^*)$. Let $\eta \in \Omega_X^2$ be a 2-form; we will use this to perturb our equations. We will require that $\int_X\eta > 2\pi d.$ The equations we will consider for a pair $(A, \alpha) \in \mathscr{A}(L)\times \Omega^0_X(L\otimes E)$ are:\begin{equation}\label{sw2}
\begin{cases}
\overline{\partial}_{A\otimes B}\alpha = 0 \\
i F_A + |\alpha|^2\omega - \eta = 0. \\
\end{cases}
\end{equation} We define the moduli space $\mathcal{N}^d_X(E)$ as the space of solutions to the above equations after taking the quotient by the gauge group $\mathscr{G}.$ The space $\mathcal{N}^d_X(E)$ is generically a complex projective manifold of dimension $\dim_\mathbb{C}\mathcal{N}^d_X(E) = Nd+D-(N-1)(g-1)$ (see Corollary \ref{manifold} and Proposition \ref{projective}.) Our main result is the computation of its rational cohomology. In Section 2, we will compute the Euler characteristic, yielding the following theorem:

\begin{theorem}\label{chi}
    The Euler characteristic of the moduli space $\mathcal{N}^d_X(E)$ is: $$\chi(\mathcal{N}^d_X(E)) = (-1)^{\dim_\mathbb{C}\mathcal{N}}N^g{{2g-2}\choose{\dim_\mathbb{C}\mathcal{N}}}.$$
\end{theorem}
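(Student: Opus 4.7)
The plan is to identify $\mathcal{N}^d_X(E)$ algebro-geometrically, realize it as a zero locus over $\mathrm{Pic}^d(X)$, and reduce the Euler characteristic to an intersection computation that collapses via Vandermonde's identity. Throughout, write $\mathcal{P} := \mathrm{Pic}^d(X)$, let $\pi\colon \mathcal{P}\times X \to \mathcal{P}$ be the projection, $\mathcal{L}$ the Poincar\'e line bundle, and $\theta \in H^2(\mathcal{P})$ the theta class.

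By the Hitchin--Kobayashi correspondence together with the stability condition $\int_X\eta > 2\pi d$, the moduli $\mathcal{N}^d_X(E)$ is identified with the space of pairs $(L,[\alpha])$ where $L \in \mathcal{P}$ and $0 \neq [\alpha] \in \mathbb{P}(H^0(L\otimes E))$. Choosing an effective divisor $np_0\subset X$ with $n$ large enough that $R^1\pi_*(\mathcal{L}\otimes E(np_0))$ vanishes, the derived pushforward $R\pi_*(\mathcal{L}\otimes E)$ is represented by a two-term complex of honest vector bundles $[V_0\to V_1]$ on $\mathcal{P}$ (with $V_1$ a skyscraper pushforward). Then $\mathcal{N}^d_X(E)$ sits inside $Y := \mathbb{P}(V_0)$ as the zero locus of the tautological section of $W := V_1\otimes\mathcal{O}(1)$ obtained by composing $\mathcal{O}(-1)\hookrightarrow p^*V_0$ with the complex differential; for generic perturbation this is smooth of the expected dimension.

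Next, apply Chern--Gauss--Bonnet: $\chi(\mathcal{N}^d_X(E)) = \int_Y e(W)\cdot c(TY - W)$ in top degree. Triviality of $T\mathcal{P}$ and the relative Euler sequence identify $TY - W = V\otimes\mathcal{O}(1) - \mathcal{O}$ in $K$-theory, where $V := V_0 - V_1$. The heart of the proof is the Grothendieck--Riemann--Roch computation $\mathrm{ch}(V) = r - N\theta$ with $r := Nd + D - N(g-1)$; this hinges on $\gamma^3 = 0$ for the universal class $\gamma \in H^1(\mathcal{P})\otimes H^1(X)$ (a consequence of $H^k(X) = 0$ for $k \geq 3$), which truncates the expansion of $e^{c_1(\mathcal{L})}$ to just four terms and collapses the Chern character to a single theta class. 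Exponentiating yields
$$c(V\otimes\mathcal{O}(1)) = (1+\xi)^r \, e^{-N\theta/(1+\xi)}, \qquad \xi := c_1(\mathcal{O}(1)).$$
The parallel computation on the skyscraper sequence gives $\mathrm{ch}(V_1) = nN$, so $V_1$ is cohomologically trivial and $e(W) = \xi^{nN}$.

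Finally, push forward along $p\colon Y\to\mathcal{P}$ via $p_*(\xi^{r_0 - 1 + k}) = s_k(V_0) = (N\theta)^k/k!$ (dual to $c(V_0) = e^{-N\theta}$), and integrate on $\mathcal{P}$ using $\int_\mathcal{P}\theta^g = g!$. Expanding $(1+\xi)^r e^{-N\theta/(1+\xi)} = \sum_k (-N\theta)^k(1+\xi)^{r-k}/k!$ and isolating the correct degree reduces the Euler characteristic to the finite sum
$$\chi(\mathcal{N}^d_X(E)) = N^g \sum_k (-1)^k \binom{g}{k}\binom{r-k}{\dim_\mathbb{C}\mathcal{N}^d_X(E) - k}.$$
Rewriting the generalized binomials via $\binom{-m}{j} = (-1)^j\binom{m+j-1}{j}$ and applying Vandermonde's convolution then collapses this to $(-1)^{\dim_\mathbb{C}\mathcal{N}^d_X(E)} N^g\binom{2g-2}{\dim_\mathbb{C}\mathcal{N}^d_X(E)}$. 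The main technical hurdle will be the GRR calculation: beyond the formal manipulation one must verify that the Chern character simplification genuinely collapses as claimed, and that the zero-locus formula for $\chi$ is independent of the auxiliary divisor $np_0$.
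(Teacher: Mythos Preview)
Your argument is correct and follows essentially the same route as the paper: both realize $\mathcal{N}$ as the zero locus of a tautological section in a projective bundle over the Jacobian, compute $\ch(\ind\delbar_{\mathscr{L}}) = R - N\Theta$ (you via GRR, the paper via the families Atiyah--Singer theorem), push forward powers of the hyperplane class using the resulting Segre classes $s_k = (N\Theta)^k/k!$, and collapse the same binomial sum with Vandermonde. The only differences are cosmetic---your two-term complex $[V_0\to V_1]$ built from an auxiliary divisor $np_0$ replaces the paper's analytic stabilization $\delbar_{\mathscr{L}}\oplus\psi$, and you package the Chern classes as $(1+\xi)^r e^{-N\theta/(1+\xi)}$ where the paper expands them term by term.
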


\begin{remark}\label{nonempty}
    The expression for the Euler characteristic in Theorem \ref{chi} is nonzero whenever the dimension $\dim_\mathbb{C}\mathcal{N}^d_X(E) \geq 0,$ which tells us that generically, the moduli spaces $\mathcal{N}$ must be non-empty whenever they have non-negative formal dimension. This resolves the question posed in \cite[Remark 8.9]{Doan}.
\end{remark}

\begin{remark}
    In the remainder of the paper, unless otherwise stated, we will assume that the parameters $h$ and $B\in\mathscr{A}(E)$ have been chosen generically. This permits us to use transversality results proved in \cite{Doan}.  
\end{remark}

In Section 3, we will use techniques from algebraic geometry to compute the entire Poincar\'e polynomial of $\mathcal{N}^d$ and provide a path to computing the Hodge numbers. To state our result, we define the function $\Lambda:\mathbb{Z}^2_{\geq0}\to\mathbb{Z}$ by: \begin{equation}\label{Lambda}\Lambda(g, n) = \begin{cases}{{2g}\choose{n}}+{{2g}\choose{n-2}}+\dots+{{2g}\choose{0}} & \text{if}~n~\text{is~even} \\
{{2g}\choose{n}}+{{2g}\choose{n-2}}+\dots+{{2g}\choose{1}} & \text{if}~n~\text{is~odd} \\ \end{cases}.\end{equation}

\begin{theorem}\label{cohomology}
    The dimensions of the cohomology groups of $\mathcal{N}$ are given as: $$\dim H^r(\mathcal{N};\mathbb{Q}) = \begin{cases}
    \Lambda(g, r) & \text{if}~r<\dim_\mathbb{C}\mathcal{N} \\
    N^g{{2g-2}\choose{r}}-2\sum_{j=0}^r(-1)^{r-j}\Lambda(g,j) & \text{if}~r = \dim_\mathbb{C}\mathcal{N} \\
    \Lambda(g, \dim_\mathbb{R}\mathcal{N} - r) & \text{if}~\dim_\mathbb{C}\mathcal{N}<r\leq\dim_\mathbb{R}\mathcal{N} \\
    0 & \text{if}~\dim_\mathbb{R}\mathcal{N}<r \\
    \end{cases}$$
\end{theorem}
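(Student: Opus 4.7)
The plan is to realize $\mathcal{N}$ as the zero locus of a section of a positive vector bundle on a projective bundle over $\mathrm{Pic}^d(X)$, and then combine a Lefschetz-type theorem for such zero loci with Poincaré duality and the Euler characteristic computation of Theorem \ref{chi}.

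First I would pass to the complex-algebraic picture: by the Hitchin–Kobayashi/vortex correspondence, a gauge equivalence class of solutions is the same data as a pair $(L, [\alpha])$ with $L \in \mathrm{Pic}^d(X)$ and $[\alpha] \in \mathbb{P} H^0(L \otimes E)$; the perturbation inequality $\int_X \eta > 2\pi d$ forces $\alpha \neq 0$. With $p: X \times \mathrm{Pic}^d(X) \to \mathrm{Pic}^d(X)$ the projection and $\mathcal{L}$ a Poincaré line bundle, this identifies $\mathcal{N}$ with the relative projectivization of nonzero sections of $\mathcal{F} := p_*(\mathcal{L} \otimes p^* E)$. To get a locally free model I would fix an effective divisor $\mathfrak{d}$ on $X$ of sufficiently large degree and push forward the short exact sequence
$$0 \to \mathcal{L} \otimes p^* E \to \mathcal{L} \otimes p^*(E(\mathfrak{d})) \to (\mathcal{L} \otimes p^* E)|_{\mathfrak{d} \times \mathrm{Pic}^d} \to 0.$$
For $\deg \mathfrak{d} \gg 0$ the middle term has vanishing $R^1 p_*$, producing a two-term resolution $0 \to \mathcal{F} \to \mathcal{H}_0 \xrightarrow{\phi} \mathcal{H}_1 \to R^1 p_*(\mathcal{L} \otimes p^* E) \to 0$ by vector bundles. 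Inside the projective bundle $\pi: \mathbb{P} := \mathbb{P}(\mathcal{H}_0) \to \mathrm{Pic}^d(X)$, the moduli space $\mathcal{N}$ is then the zero scheme of the section $s$ of $\mathcal{V} := \pi^* \mathcal{H}_1 \otimes \mathcal{O}_{\mathbb{P}}(1)$ obtained by composing the tautological inclusion $\mathcal{O}_{\mathbb{P}}(-1) \hookrightarrow \pi^* \mathcal{H}_0$ with $\pi^* \phi$.

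The key technical step is to arrange that $\mathcal{V}$ is ample on $\mathbb{P}$, so that the Sommese–Lazarsfeld Lefschetz theorem applies and yields $H^r(\mathbb{P}; \mathbb{Q}) \xrightarrow{\sim} H^r(\mathcal{N}; \mathbb{Q})$ for $r < \dim_{\mathbb{C}} \mathcal{N}$. Positivity of $\mathcal{V}$ follows the pattern of Fulton–Lazarsfeld's analysis of classical Brill–Noether loci: an explicit computation using the construction of $\mathcal{H}_1$ from $\mathfrak{d}$, combined with the relative ampleness of $\mathcal{O}_{\mathbb{P}}(1)$, should show that taking $\deg \mathfrak{d}$ large enough suffices. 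I expect this ampleness verification, together with the control of the jump loci of $R^1 p_*(\mathcal{L} \otimes p^* E)$ needed to guarantee that $s$ cuts out $\mathcal{N}$ in the expected codimension, to be the main obstacle.

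Once the Lefschetz theorem is in place, the Leray–Hirsch theorem gives $H^*(\mathbb{P}; \mathbb{Q}) = H^*(\mathrm{Pic}^d(X); \mathbb{Q}) \otimes \mathbb{Q}[h]/(h^{\mathrm{rank}\,\mathcal{H}_0})$ with $h = c_1(\mathcal{O}_{\mathbb{P}}(1))$, so $\dim H^r(\mathbb{P}) = \sum_{j \geq 0} \binom{2g}{r-2j}$ in the range where the truncation in $h$ has not kicked in. Since $\mathrm{rank}\,\mathcal{H}_0$ can be made arbitrarily large by increasing $\deg \mathfrak{d}$, this sum equals $\Lambda(g, r)$ for every $r \leq \dim_{\mathbb{C}} \mathcal{N}$, which gives the first case of the theorem. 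The third case (when $r > \dim_{\mathbb{C}} \mathcal{N}$) follows immediately from Poincaré duality on the smooth complex projective manifold $\mathcal{N}$, as provided by Corollary \ref{manifold} and Proposition \ref{projective}. Finally, for the middle degree $r = \dim_{\mathbb{C}} \mathcal{N}$, combining Poincaré duality in the identity $\chi(\mathcal{N}) = (-1)^r b_r + 2\sum_{j<r}(-1)^j b_j$ with Theorem \ref{chi} and the $\Lambda(g, j)$ values just obtained lets one solve for $b_r$ in closed form, matching the stated expression.
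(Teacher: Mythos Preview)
Your overall architecture matches the paper's exactly: realize $\mathcal{N}$ inside a projective bundle over $\mathrm{Pic}^d(X)$ via a two-term locally free resolution of $p_*(\mathscr{P}_d\otimes\mathscr{E})$, invoke a Fulton--Lazarsfeld/Sommese--Lazarsfeld connectedness theorem to get $H^r(\mathcal{N})\cong H^r(\mathbb{P})$ below the middle, compute the right-hand side by Leray--Hirsch, then use Poincar\'e duality and Theorem \ref{chi} for the remaining degrees. The combinatorics at the end is fine.

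The genuine gap is the ampleness step, and your proposed fix does not work. You suggest that relative ampleness of $\mathscr{O}_{\mathbb{P}}(1)$ together with increasing $\deg\mathfrak{d}$ should suffice. But in your presentation $\mathcal{H}_1$ is the skyscraper bundle, hence algebraically trivial, so ampleness of $\mathcal{V}=\pi^*\mathcal{H}_1\otimes\mathscr{O}_{\mathbb{P}}(1)$ reduces to ampleness of $\mathscr{O}_{\mathbb{P}(\mathcal{H}_0)}(1)$ on the total space, i.e.\ to a positivity statement about $\mathcal{H}_0$ (or dually, in the paper's presentation, about $W_X(\mathscr{E})=R^1p_*(\mathscr{P}_d\otimes\mathscr{E}(-Z))$). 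Relative ampleness over the Jacobian gives nothing in the base directions, and enlarging $\mathfrak{d}$ only increases the rank without producing horizontal positivity. Fulton--Lazarsfeld's Lemma 2.2 handles exactly this point, but only for $N=1$; their argument does not extend formally to higher rank $\mathscr{E}$.

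The paper's key idea to close this gap is spectral curves: for generic $\mathscr{E}$ one writes $\mathscr{E}=\pi_*\xi$ for a degree-$N$ branched cover $\pi:Y\to X$ and a line bundle $\xi$ on $Y$ (Beauville--Narasimhan--Ramanan). A base-change and projection-formula computation then shows $W_X(\mathscr{E})\cong(\pi^*)^*W_Y(\xi)$, the pullback of the rank-one object on $\mathrm{Pic}(Y)$, where Fulton--Lazarsfeld's ampleness lemma applies directly; ampleness is preserved under restriction to the closed subvariety $\pi^*\mathrm{Pic}^d(X)\subset\mathrm{Pic}(Y)$. This reduction to rank one on the spectral curve is the missing ingredient in your outline.
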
 This cohomology computation immediately implies some more concrete statements about the topology of the moduli spaces:

\begin{corollary}When $\dim_\mathbb{C}\mathcal{N} > 0,$ the space $\mathcal{N}^d_X(E)$ is connected.
    \begin{itemize}
        \item When $\dim_\mathbb{C}\mathcal{N} = 0,$ it consists of $N^g$ positively oriented points.
        \item When $\dim_\mathbb{C}\mathcal{N} = 1,$ it is a Riemann surface of genus $N^g(g-1)+1$.
    \end{itemize}
\end{corollary}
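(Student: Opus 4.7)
The plan is to derive each of the three statements directly from Theorem \ref{chi} and Theorem \ref{cohomology}, combined with the smoothness/projectivity already promised in Corollary \ref{manifold} and Proposition \ref{projective}.

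For the connectedness claim when $\dim_\mathbb{C}\mathcal{N} > 0$, I would apply the first branch of Theorem \ref{cohomology} at $r = 0$. Since $0 < \dim_\mathbb{C}\mathcal{N}$ by hypothesis, that branch is active and gives $\dim H^0(\mathcal{N};\mathbb{Q}) = \Lambda(g,0) = \binom{2g}{0} = 1$. Hence $\mathcal{N}$ has a single connected component.

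For $\dim_\mathbb{C}\mathcal{N} = 0$, I would first invoke Corollary \ref{manifold} to conclude that (for generic parameters) $\mathcal{N}$ is a smooth complex manifold of dimension $0$, i.e.\ a finite disjoint union of points. Each point inherits a canonical positive orientation from the underlying complex structure, so the signed count equals the cardinality. Theorem \ref{chi} then evaluates this count as $\chi(\mathcal{N}) = (-1)^0 N^g \binom{2g-2}{0} = N^g$.

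For $\dim_\mathbb{C}\mathcal{N} = 1$, I would combine Proposition \ref{projective} and Corollary \ref{manifold} to see that $\mathcal{N}$ is a smooth projective complex curve, and the connectedness statement already proved shows it is in fact a connected compact Riemann surface of some genus $g'$. Its Euler characteristic is $2 - 2g'$, while Theorem \ref{chi} gives $\chi(\mathcal{N}) = -N^g\binom{2g-2}{1} = -2N^g(g-1)$. Equating and solving yields $g' = N^g(g-1) + 1$.

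I do not expect a genuine obstacle here: the corollary is really a translation of the previously stated Euler characteristic and $H^0$ computations into concrete geometric language. The only point that warrants a brief remark is the orientation statement in the $0$-dimensional case, which relies on the complex structure on $\mathcal{N}$ rather than on any additional Seiberg--Witten-theoretic input.
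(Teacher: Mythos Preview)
Your proposal is correct and matches the paper's (implicit) argument: the corollary is stated immediately after Theorem~\ref{cohomology} with the remark that ``this cohomology computation immediately implies'' it, and the only inputs needed are exactly the ones you use---$\dim H^0=\Lambda(g,0)=1$ for connectedness, and the Euler characteristic from Theorem~\ref{chi} (plus smoothness/projectivity from Corollary~\ref{manifold} and Proposition~\ref{projective}) for the point-count and the genus.
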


This corollary generalizes Doan's result that for $N =2, D = 0,$ the moduli space $\mathcal{N}$ consists of $2$ points when $g=1, d=0$ and is a Riemann surface of genus 5 when $g = 2, d = 1$ \cite[Theorem 8.1]{Doan}. Moreover, our cohomology computation agrees with the observation that $\mathcal{N}^d_X(E)$ is a projective space bundle over the Jacobian $\mathscr{J}_X$ whenever $d$ is sufficiently large. This observation was made previously by Doan in the case $N = 2, D = 0$ \cite[Theorem 8.1]{Doan}.

\begin{remark} Doan uses a different convention for $E$ and $L$ as in this paper. The symbols $L$ and $E$ in this paper would be written in Doan's paper as $L\otimes K^{1/2}_X$ and $E^*$, respectively.
\end{remark}

\subsection{Multi-monopole equations as generalized Brill-Noether theory} Of paramount importance to this paper is the fact that these moduli spaces have an interpretation purely expressible in terms of the complex geometry of the Riemann surface. 

\begin{remark} We will use script letters to denote holomorphic vector bundles, and in particular we use $\mathscr{E}$ to denote the holomorphic structure on $E$ coming from $B.$ 
\end{remark}

The following theorem of Doan is proved for $N = 2, D = 0$ but his proof easily applies to the general case:

\begin{theorem}[\cite{Doan}]\label{holomorphic}
$\mathcal{N}^d$ is homeomorphic to the space of pairs: $$\left.\left\{(A, \alpha)\in \mathscr{C} \middle\vert \begin{array}{l}
    \overline{\partial}_{A\otimes B}\alpha = 0 \\
    \alpha \not\equiv 0 \\
  \end{array} \right\}\middle/\mathscr{G}^c.\right.$$
\end{theorem}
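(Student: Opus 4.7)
The forward direction is essentially immediate. Any solution $(A,\alpha)$ to (\ref{sw2}) manifestly satisfies the holomorphicity condition, and integrating the second equation over $X$ yields
\begin{equation*}
\|\alpha\|_{L^2}^2 \;=\; \int_X \eta \;-\; 2\pi d \;>\; 0
\end{equation*}
by the standing hypothesis on $\eta$, so in particular $\alpha \not\equiv 0$. Since $\mathscr{G}\subset\mathscr{G}^c$, this descends to a continuous map $\Phi$ from $\mathcal{N}^d$ to the right-hand-side quotient.

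To build the inverse, I would carry out an abelian Hitchin--Kobayashi correspondence: within each $\mathscr{G}^c$-orbit of a holomorphic pair with $\alpha \not\equiv 0$, find exactly one $\mathscr{G}$-orbit of unitary representatives satisfying the real moment-map equation. Elements of $\mathscr{G}^c/\mathscr{G}$ are parametrized by $g = e^{u/2}$ with $u:X\to\mathbb{R}$; under such a gauge transformation $|\alpha|^2\mapsto e^u|\alpha|^2$ and a direct computation in types gives $iF_{g\cdot A} = iF_A - i\partial\bar\partial u$. The real equation thus reduces to the scalar vortex-type PDE
\begin{equation*}
-i\partial\bar\partial u \;+\; e^u\,|\alpha|^2\,\omega \;=\; \eta - iF_A
\end{equation*}
for the real function $u$, so existence and uniqueness of $u$ correspond exactly to surjectivity and injectivity of $\Phi$.

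The central step is solving this PDE. I would do so variationally, by minimizing
\begin{equation*}
\mathcal{J}(u) \;=\; \int_X \!\bigl(\tfrac14 |du|^2 + e^u|\alpha|^2\bigr)\,\omega \;-\; \int_X u\,(\eta - iF_A),
\end{equation*}
whose Euler--Lagrange equation is the displayed PDE (up to normalization). Strict convexity of $\mathcal{J}$ gives uniqueness. For coercivity, write $u = \bar u + v$ with $\bar u$ the mean value and $v$ of mean zero: the quadratic term controls $v$ via the Poincar\'e inequality, while in the $\bar u$ direction the exponential dominates as $\bar u \to +\infty$, and the affine term $-\bar u\bigl(\int_X\eta - 2\pi d\bigr)$ forces $\mathcal{J}\to +\infty$ as $\bar u \to -\infty$, using precisely the hypothesis $\int_X\eta > 2\pi d$ together with $\|\alpha\|_{L^2}^2 > 0$. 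A minimizer therefore exists and is unique.

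The main obstacle is the coercivity estimate just sketched; this is a mild adaptation of the Bradlow/Garc\'\i a-Prada vortex-equation argument to our perturbed setting, and the assumption $\int_X\eta > 2\pi d$ is exactly the open stability chamber condition that makes it work. The remaining steps are routine: elliptic regularity for the scalar PDE lets one pass from a weak to a smooth solution and yields continuity of the inverse, and the hypothesis $\alpha\not\equiv 0$ forces the $\mathscr{G}^c$-stabilizer of any holomorphic pair to reduce to the constant subgroup $S^1\subset\mathscr{G}$, so both quotients are Hausdorff and the continuous bijection $\Phi$ is upgraded to a homeomorphism.
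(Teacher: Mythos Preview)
The paper does not supply its own proof of this statement; it simply cites Doan and remarks that his argument for $N=2$, $D=0$ carries over to general $N,D$. Your sketch is exactly the standard abelian-vortex/Hitchin--Kobayashi argument---reduce the curvature equation along a $\mathscr{G}^c$-orbit to a Kazdan--Warner-type scalar PDE for $u$ and solve it by minimizing a strictly convex functional---which is the approach Doan uses, so there is nothing substantive to compare. Your identification of $\int_X\eta>2\pi d$ as the stability condition making the coercivity work is correct and is the key point.

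One minor slip: the $\mathscr{G}^c$-stabilizer of a pair $(A,\alpha)$ with $\alpha\not\equiv 0$ is trivial, not the constant $S^1$, since a nontrivial constant $g$ fixes $A$ but sends $\alpha$ to $g\alpha\neq\alpha$. This does not affect your argument, as freeness of the action is all that is used.
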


Hence, the moduli space $\mathcal{N}$ parameterizes pairs $(\mathscr{L},\alpha)$ with $\mathscr{L}$ a holomorphic degree $d$ line bundle and $\alpha\in\mathbb{P}H^0_X(\mathscr{L}\otimes \mathscr{E})$ a holomorphic section determined up to scale. There is a map $q:\mathcal{N}^d\to\mathscr{J}_X = \mathscr{A}(L)/\mathscr{G}^c$ given by forgetting the section and identifying the Jacobian $\mathscr{J}_X$ with the component of the Picard group of degree $d.$ The fiber above $\mathscr{L} \in \mathscr{J}_X$ is the projective space $\mathbb{P}H^0_X(\mathscr{L}\otimes \mathscr{E}).$

Thus, the locus $q(\mathcal{N}^d)\subset \mathscr{J}_X$ is those line bundles $\mathscr{L}\in\text{Pic}^d_X\cong\mathscr{J}_X$ with $H^0_X(\mathscr{L}\otimes\mathscr{E})\geq 1,$ and the moduli space $\mathcal{N}^d$ is a desingularization (its \emph{canonical blow-up} in the language of \cite{ACGH}.) This locus becomes interesting when $\chi(\mathscr{L}\otimes \mathscr{E})\leq 0,$ so that $\mathscr{L}\otimes\mathscr{E}$ has no nontrivial holomorphic sections for generic $\mathscr{L}$; in this case, much like ordinary Brill-Noether theory, $q(\mathcal{N}^d)$ describes precisely the non-generic line bundles with respect to this property. When $N = 1$, this locus $q(\mathcal{N}^d)$ is precisely a translate of the Brill-Noether variety $W^0_d$ and $\mathcal{N}^d(\mathscr{E})$ parameterizes effective divisors of degree $d+D$, so it can be identified with the symmetric product $\Sym^{d+D}(X).$

\begin{remark}
    When $N = 1$, Theorem \ref{cohomology} recovers Macdonald's formula for the cohomology of symmetric products of a Riemann surface, which he proves with a different technique, namely using that $\Sym^d(X)$ is the quotient of a $\mathfrak{S}_d$-action on $\underbrace{X\times\dots\times X}_{n\text{~times}}$ \cite{Macdonald}.
\end{remark}

\begin{remark}
    There is another geometric interpretation of $\mathcal{N}^d_X(\mathscr{E})$ as a compactification of the moduli space of degree $-d$ line subbundles of $\mathscr{E}.$ For, whenever $N\geq2,$ a Zariski open subset of $\mathcal{N}^d$ consists of pairs $(\mathscr{L}, \alpha)$ with $\alpha$ nowhere vanishing. Such $\alpha$ can be thought of as an injective homeomorphism $\alpha\in H^0_X(\Hom(\mathscr{L}^{-1}, \mathscr{E}))$, defined up to scale of course. Such homomorphisms are characterized up to scale by their images, which are the degree $-d$ line subbundles of $\mathscr{E}.$ Conversely, each such line subbundle gives rise to an injective homeomorphism $\alpha\in H^0_X(\Hom(\mathscr{L}^{-1}, \mathscr{E}))$ for some $\mathscr{L}\in\text{Pic}^d_X.$
\end{remark}

\subsection{The multi-monopole equations in dimension 3}
The remainder of the introduction will not be referenced in the remainder of the paper, and is concerned with the implications of this paper for differential geometry in dimension 3. The Seiberg-Witten equations with multiple spinors are a generalization of the Seiberg-Witten equations which can be defined on a Riemannian 3-manifold $(Y, h)$ with a spin structure. Let $\mathbb{S}\to Y$ be the corresponding spinor bundle with Clifford multiplication $\rho:\bigwedge^*T^*Y\otimes\mathbb{C}\to\mathfrak{sl}(\mathbb{S})$ and Dirac operator $D:\Omega^0_Y(\mathbb{S})\to\Omega^0_Y(\mathbb{S})$. Define $\xi\to Y$ to be a hermitian line bundle and $E\to Y$ to be a hermitian vector bundle of rank $N.$ Fix a hermitian connection $B\in\mathscr{A}(E)$ and a perturbation $\eta \in \Omega^2_Y(i\mathbb{R}).$ Then, the Seiberg-Witten equations with multiple spinors are the following equations for a pair $(A, \Psi)\in \mathscr{A}(\xi)\times \Omega^0_Y(\mathbb{S}\otimes \xi\otimes E)$: $$\begin{cases}
    D_{A\otimes B}\Psi = 0, \\
    \rho(F_A) = \Psi\Psi^* - \frac12|\Psi|^2 + \eta \\
\end{cases}$$

On a 3-manifold $Y = X\times S^1,$ where $X$ is a Riemann surface, if $\xi = \pi^*(L\otimes K^{1/2}_X)$ for some line bundle $L\to X$ and $E$ is an $SU(2)$-bundle pulled back from $X$, \cite{Doan} shows that the multi-spinor Seiberg-Witten equations dimensionally reduce to the following equations for a triple: $(A, \alpha, \beta)\in \mathscr{A}(L)\times H^0(L\otimes E)\times H^0(L^*\otimes E^*\otimes K_X):$ \begin{equation}\label{sw2big}\begin{cases}
    \overline{\partial}_{AB}\alpha = \overline{\partial}_{AB}\beta = 0 \\
    \alpha\beta = 0 \\
    i*F_A+|\alpha|^2-|\beta|^2-i*\eta = 0. \\
\end{cases}\end{equation} Moreover, for generic $h, B,$ and $\eta$ on $X$ with $\int_X\eta>>0,$ Doan proves that $\beta = 0$, so the moduli space of solutions agrees precisely with $\mathcal{N}^d_X(E).$ 

\begin{remark} The sense in which our moduli spaces are \emph{simplified} from the multi-spinor Seiberg-Witten moduli spaces, as described in the introduction, is that we take $\beta = 0$ in Equation \ref{sw2big}.
\end{remark}

The Euler characteristic of this moduli space is the value of a putative 3-manifold invariant of $Y$ defined from a count of solutions to the multi-spinor Seiberg-Witten equations. These equations and putative invariants are well-studied due to their relationships with $PSL(2,\mathbb{R})$-character varieties of 3-manifolds, $\mathbb{Z}/2$-harmonic spinors, and special holonomy (see for instance \cite{Haydys, Doan-Walpuski, He-Parker, haydys-speculation, walpuski-speculation}.)

\subsection{Acknowledgements}
I owe many thanks to my advisor Professor Peter Kronheimer for his support, as well as Jiakai Li, Juan Mu\~noz-Ech\'aniz, Daniel Santiago-Alvarez, and Claudia Yao for insightful conversations. This project was conducted under the Simons Collaboration on New Structures in Low-Dimensional Topology.

\section{Characteristic Classes of the Moduli Spaces}

Recall that throughout, we will freely assume our parameters $h$ and $B\in\mathscr{A}(E)$ are chosen generically.

\subsection{Embedding $\mathcal{N}^d$ in a projective bundle}

We will define the configuration space of pairs as follows, where the sections are taken to be of regularity $L^2_k$ for $k$ large and the elements of the gauge group are taken to be in $L^2_{k+1}$: $$\mathscr{C} = \mathscr{A}(L)\times L^2_k\Omega^0_X(L\otimes E), ~~ \mathscr{B}^c = \mathscr{C}/\mathscr{G}^c.$$ 

\begin{remark}
A standard elliptic bootstrapping argument shows that the sections $\alpha$ contained in $\mathcal{N}\subset \mathscr{C}/\mathscr{G}^c$ are $C^\infty.$ 
\end{remark}

Consider the bundle $\widetilde{P}\to\mathscr{A}(L)\times X$ which is the pullback of $L\otimes E$ by projection onto the second factor. Each slice $\widetilde{P}\vert_{\{A\}\times X}$ comes with a distinguished connection $A\otimes B.$ The complex gauge group $\mathscr{G}^c$ acts on $\widetilde{P} = \mathscr{A}(L)\times \left(L\otimes E\right)$ by $g\cdot(A, x, v) = (g\cdot A, x, g\cdot v).$ Taking the quotient of $\widetilde{P}$ by this action gives us a bundle $P\to\left(\mathscr{A}(L)/\mathscr{G}^c\right)\times X\cong\mathscr{J}_X\times X$ which parameterizes a family of connections on each fiber $P\vert_{\{\mathscr{L}\}\times X}$. For each $\mathscr{L}\in\mathscr{J}_X,$ denote by $\delbar_\mathscr{L}$ the associated Dolbeault operator of the distinguished connection on $P\vert_{\{\mathscr{L}\}\times X}$. Holomorphically speaking, the bundle $P$ is the tensor product of the degree $d$ Poincar\'e line bundle with the pullback of $E\to X$ by projection onto the second factor.

For $k$ large, the Hirzebruch-Riemann-Roch Theorem computes that the index of each operator $\delbar_\mathscr{L}:L^2_k\Omega^0_X(L\otimes E)\to L^2_{k-1}\Omega^{0,1}_X(L\otimes E)$ is $$R := \ind \delbar_\mathscr{L}= N(d+1-g)+D.$$ As in the construction of the index bundle of a family in \cite{AS4}, there exists $K$ a positive integer and a linear map $\psi:\mathbb{C}^K\to L^2_{k-1}\Omega_X^{0,1}(L\otimes E)$ such that the Fredholm map $\overline{\partial}_\mathscr{L}\oplus\psi:L^2_{k}\Omega_X^{0}(L\otimes E)\oplus\mathbb{C}^K\to L^2_{k-1}\Omega_X^{0,1}(L\otimes E)$ is surjective for each $\mathscr{L}\in\mathscr{J}_X$. Moreover, we may define a bundle $p:U\to\mathscr{J}_X$ whose fiber above $\mathscr{L}\in \mathscr{J}_X$ is the space $\Ker \overline{\partial}_\mathscr{L}\oplus\psi.$ The map $(\mathscr{L}, (\alpha, v))\mapsto v$ defines a section $\tilde{s}$ of the trivial bundle $\underline{\mathbb{C}}^K\to U$ which descends to a section $s$ of the bundle $\mathscr{O}_{\mathbb{P}U}(-1)\otimes\underline{\mathbb{C}}^K$ over the fiberwise projectivization $\mathbb{P}U.$ By Theorem \ref{holomorphic}, the moduli space $\mathcal{N}$ may be identified with the zero section of $s.$

Let $\xi := c_1(\mathscr{O}_{\mathbb{P}U}(-1))\in H^2(\mathbb{P}U; \mathbb{Z}).$ Then, the Leray-Hirsch theorem and the definition of Chern classes provide a formula for the cohomology ring of $\mathbb{P}U$: \begin{equation}\label{leray-hirsch}\left. H^*(\mathbb{P}U;\mathbb{Q})\cong H^*(\mathscr{J}_X;\mathbb{Q})[\xi]\middle/\left(\sum_{i=0}^{K+R}c_i(U)\xi^{K+R-i} = 0\right)\right.\end{equation}

Let $\Theta\in H^2(\mathscr{J}_X;\mathbb{Z})$ be the cohomology class of the canonical principal polarization.

\begin{proposition}
    The section $s\in \Omega^0_{\mathbb{P}U}\left(\mathscr{O}_{\mathbb{P}U}(-1)\otimes\underline{\mathbb{C}}^K\right)$ is transverse to the zero section.
\end{proposition}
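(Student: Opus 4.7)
The plan is to verify transversality pointwise at each zero $[\alpha_0] \in \mathcal{N}$ over $\mathscr{L}_0 \in \mathscr{J}_X$. I would first compute the linearization of $s$ explicitly, reducing the question to surjectivity of a Kodaira-Spencer style cup product, and then use Serre duality to translate this into an injectivity statement for a Petri-type multiplication map. The genericity of $(h, B)$ will then supply the injectivity via the transversality results of \cite{Doan}.

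Concretely, I would lift $[(\alpha_0, 0)]$ to $(\mathscr{L}_0, (\alpha_0, 0)) \in U$ with $\alpha_0 \ne 0$ and $\delbar_{\mathscr{L}_0}\alpha_0 = 0$, and trivialize $\mathscr{O}_{\mathbb{P}U}(-1)$ using the reference vector $(\alpha_0, 0)$ so that $s$ is locally represented by $(\mathscr{L}, (\alpha, v)) \mapsto v \in \mathbb{C}^K$. Differentiating the defining relation $\delbar_{\mathscr{L}}\alpha + \psi(v) = 0$ along a curve through $(\mathscr{L}_0, (\alpha_0, 0))$ gives
$$\delbar_{\mathscr{L}_0}\dot\alpha + \dot{\mathscr{L}} \cdot \alpha_0 + \psi(\dot v) = 0,$$
where $\dot{\mathscr{L}} \cdot \alpha_0$ denotes the cup-product action of $\dot{\mathscr{L}} \in T_{\mathscr{L}_0}\mathscr{J}_X \cong H^1(\mathcal{O}_X)$ on $\alpha_0$. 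The differential $ds$ descends to $\mathbb{P}U$ as the projection to $\dot v$, so its surjectivity amounts to $\psi(\mathbb{C}^K) \subset \Image(\delbar_{\mathscr{L}_0}) + \Image(\cup\alpha_0)$. Because $\delbar_{\mathscr{L}_0} \oplus \psi$ is surjective by the very construction of $\psi$, this is in turn equivalent to surjectivity of the cup product
$$\cup \alpha_0 : H^1(\mathcal{O}_X) \longrightarrow H^1(\mathscr{L}_0 \otimes \mathscr{E}).$$

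By Serre duality this is equivalent to injectivity of the multiplication map
$$\mu_{\alpha_0} : H^0(K_X \otimes \mathscr{L}_0^{-1} \otimes \mathscr{E}^*) \longrightarrow H^0(K_X), \qquad \beta \mapsto \langle \alpha_0, \beta\rangle,$$
and establishing this injectivity at every point of $\mathcal{N}$ is the crux of the argument. I would handle it by contradiction: a nonzero $\beta \in \Ker \mu_{\alpha_0}$ assembles with $(\mathscr{L}_0, \alpha_0)$ into a holomorphic triple satisfying $\delbar_{A \otimes B}\alpha = \delbar_{A \otimes B}\beta = 0$ and $\alpha\beta = 0$ with $\beta \not\equiv 0$, namely the holomorphic content of the 3-dimensional multi-spinor equations (\ref{sw2big}). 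This contradicts the transversality theorem of \cite{Doan} which, under our genericity assumption on $(h, B)$ with $\int_X \eta$ large, forces every such solution to have $\beta \equiv 0$. Hence $\mu_{\alpha_0}$ is injective and $s$ is transverse to the zero section.
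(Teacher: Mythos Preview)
Your linearization and the reduction to ``surjectivity of the cup product $\cup\alpha_0:H^1(\mathcal{O}_X)\to H^1(\mathscr{L}_0\otimes\mathscr{E})$'' are correct, and via Serre duality this is indeed equivalent to injectivity of your Petri map $\mu_{\alpha_0}$. Up to this point your argument is the same as the paper's, just phrased dually: the paper observes directly that the needed condition is surjectivity of $dF_{(\mathscr{L}_0,\alpha_0)}(b,\beta)=\delbar_{\mathscr{L}_0}\beta+b^{0,1}\alpha_0$, and your injectivity of $\mu_{\alpha_0}$ is exactly the Serre-dual reformulation of that surjectivity.

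The weak point is your final step. You invoke ``the transversality theorem of \cite{Doan} which \dots\ forces every such solution to have $\beta\equiv0$,'' but the $\beta=0$ statement in \cite{Doan} (cf.\ the discussion of equations~(\ref{sw2big})) is about solutions of the \emph{full} Seiberg--Witten system, including the curvature equation $i*F_A+|\alpha|^2-|\beta|^2-i*\eta=0$. Your triple $(\mathscr{L}_0,\alpha_0,\beta)$ satisfies only the holomorphic part $\delbar\alpha=\delbar\beta=0$, $\alpha\beta=0$; you have not produced a solution of the curvature equation, and for $\int_X\eta\gg0$ the corresponding Hitchin--Kobayashi correspondence need not furnish one (indeed, that is precisely why $\beta=0$ is forced in the gauge-theoretic moduli space). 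So the $\beta=0$ result does not directly rule out your holomorphic $\beta$.

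What actually closes the argument is \cite[Theorem~7.1]{Doan}, which asserts that for generic $(h,B)$ the map $F(\mathscr{L},\alpha)=\delbar_\mathscr{L}\alpha$ is transverse to zero, i.e.\ $dF$ is surjective at every point of $\mathcal{N}$. This is exactly what the paper quotes, and it is equivalent (by your own Serre-duality computation) to the injectivity of $\mu_{\alpha_0}$ you want. In other words, your detour through the Petri map is a restatement of Doan's transversality, not a consequence of the $\beta=0$ theorem; once you cite the correct result, the proof is complete and coincides with the paper's.
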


\begin{proof}
    It suffices to show that for $x=(\mathscr{L}, (\alpha, 0))\in U,$ we have $\Image d\tilde{s}_x = \underline{\mathbb{C}}^K.$ Let $v \in \mathbb{C}^K$. Then, we need to show that there exists $(b, \beta)\in T_\mathscr{L}\mathscr{J}_X\times L^2_{k}\Omega_X^{0}(L\otimes E)$ such that $\delbar_\mathscr{L}\beta + b^{0,1}\alpha = -\psi(v).$ In \cite[Theorem 7.1]{Doan}, Doan proves that the moduli space $\mathcal{N}^d$ is cut out transversely, that is that $dF$ is surjective at $x$ where $F(\mathscr{L}, \alpha) = \delbar_\mathscr{L}\alpha$. (This theorem as presently cited only applies for the case $N = 2, D = 0,$ but the general case follows easily from the same argument.) But $dF_x(b, \beta)$ is precisely $\delbar_\mathscr{L}\beta + b^{0,1}\alpha,$ which suffices for the proof.
\end{proof}

\begin{corollary} \label{manifold}
    The space $\mathcal{N}$ is a smooth, almost complex manifold of dimension $\dim_\mathbb{C}\mathcal{N} = R + g -1.$
\end{corollary}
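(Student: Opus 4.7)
The plan is to apply the preceding proposition via the implicit function theorem, and then verify the claimed dimension and almost complex structure by direct inspection of the setup.

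First, I would invoke the transversality from the previous proposition: since $s$ is transverse to the zero section of the rank-$K$ complex vector bundle $\mathscr{O}_{\mathbb{P}U}(-1)\otimes\underline{\mathbb{C}}^K$, the implicit function theorem identifies $\mathcal{N} = s^{-1}(0)$ with a smooth submanifold of $\mathbb{P}U$ of real codimension $2K$. The ambient space $\mathbb{P}U$ is the fiberwise projectivization of the rank-$(K+R)$ bundle $U\to\mathscr{J}_X$, so $\dim_{\mathbb{C}}\mathbb{P}U = g + (K+R-1)$. Subtracting the complex codimension $K$ yields $\dim_{\mathbb{C}}\mathcal{N} = g + R - 1$, as claimed.

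For the almost complex structure, the plan is to recognize $\mathbb{P}U$ as a holomorphic projective bundle over $\mathscr{J}_X$: since $\mathscr{J}_X$ is a complex manifold and the family of Dolbeault operators $\{\delbar_{\mathscr{L}}\}$ varies holomorphically with $\mathscr{L}$ (while $\psi$ is constant), $U$ is naturally a holomorphic vector bundle and $\mathbb{P}U$ is a complex manifold. Next, I would show that $T_x\mathcal{N}$ is a complex subspace of $T_x\mathbb{P}U$ for each $x \in \mathcal{N}$. Using the formula $d\tilde{s}_x(b,\beta) = \delbar_{\mathscr{L}}\beta + b^{0,1}\alpha$ derived in the preceding proposition, this derivative is $\mathbb{C}$-linear in $(b,\beta)$ with respect to the natural complex structures on $T_{\mathscr{L}}\mathscr{J}_X \cong H^{0,1}(X;\mathbb{C})$ and $L^2_k\Omega^0_X(L\otimes E)$; consequently its kernel is a complex subspace, and $\mathcal{N}$ inherits an almost complex structure from $\mathbb{P}U$.

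The main potential obstacle is confirming that $U$ is genuinely a holomorphic vector bundle over $\mathscr{J}_X$ rather than merely a smooth complex vector bundle. I would address this by observing that the universal bundle $P\to \mathscr{J}_X\times X$ carries a natural holomorphic structure (as the tensor of the Poincar\'e bundle with the pullback of $\mathscr{E}$), so $\{\delbar_{\mathscr{L}}\}$ fits into a holomorphic family of Fredholm operators. The stabilization $\delbar_{\mathscr{L}}\oplus\psi$ then has kernel of constant dimension $K+R$ depending holomorphically on $\mathscr{L}$, giving $U$ the required holomorphic structure and completing the argument.
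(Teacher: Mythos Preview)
Your proposal is correct and matches the paper's approach: the paper states this corollary with no proof at all, treating it as immediate from the transversality proposition, and you have simply spelled out the standard implicit-function-theorem and dimension-count reasoning that justifies this.

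One small notational slip: the formula $d\tilde{s}_x(b,\beta) = \delbar_{\mathscr{L}}\beta + b^{0,1}\alpha$ is not literally the derivative of $\tilde{s}$ (which is just projection onto the $\mathbb{C}^K$-component of a tangent vector to $U$); rather, it is the formula for $dF$ in the proposition. The two are linked because tangent vectors $(b,\beta,w)$ to $U$ satisfy the linearized constraint $\delbar_{\mathscr{L}}\beta + b^{0,1}\alpha + \psi(w)=0$, so $\ker d\tilde{s} = \{w=0\}$ corresponds exactly to $\ker dF$. Your $\mathbb{C}$-linearity argument applies equally well in either formulation, so this does not affect correctness.

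Your treatment of the almost complex structure actually goes further than the paper does here. The paper is content to assert ``almost complex'' as part of the immediate corollary and explicitly defers the integrable complex structure to Proposition~\ref{projective}, where it is obtained by embedding $\mathcal{N}$ in a symmetric product via spectral curves. Your argument that $U$ is already a holomorphic bundle (because $\psi$ is constant and the family $\delbar_{\mathscr{L}}$ varies holomorphically with the Poincar\'e bundle) would in fact yield an integrable complex structure directly, bypassing the need for spectral curves for this particular point. This is a legitimate and somewhat more direct route, though the paper's later argument has the added benefit of establishing projectivity.
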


\begin{remark}
    We will prove that $\mathcal{N}$ is in fact a complex manifold in Proposition \ref{projective}.
\end{remark}

\begin{corollary}\label{pd}
    Under the above genericity assumptions, the manifold $\mathcal{N}\subset\mathbb{P}U$ is Poincar\'e dual to $\xi^K.$
\end{corollary}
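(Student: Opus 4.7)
The plan is to apply the standard fact that the zero locus of a smooth section of a vector bundle, transverse to the zero section, represents the Poincaré dual of the top Chern class of that bundle. The previous proposition already provides the transversality, so the content of the proof reduces to a Chern class computation for the bundle $V := \mathscr{O}_{\mathbb{P}U}(-1)\otimes\underline{\mathbb{C}}^K$ on $\mathbb{P}U$, whose zero locus is (by the construction of $s$ and Theorem \ref{holomorphic}) exactly $\mathcal{N}$.

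First I would invoke the general principle: since $s$ is transverse to the zero section and $V$ has complex rank $K$, the zero locus $\mathcal{N} = s^{-1}(0)$ is a smoothly embedded closed submanifold of real codimension $2K$, and its Poincaré dual in $\mathbb{P}U$ is the top Chern class $c_K(V)$. This is a consequence of the Thom isomorphism applied to the normal bundle of $\mathcal{N}$, which is canonically isomorphic to $V|_{\mathcal{N}}$ via $ds$.

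Next I would compute $c_K(V)$. Since tensoring a trivial bundle of rank $K$ with a line bundle yields a direct sum of $K$ copies of that line bundle, we have $V \cong \mathscr{O}_{\mathbb{P}U}(-1)^{\oplus K}$. The Whitney sum formula then gives
\[
c(V) = (1+\xi)^K,
\]
so the top Chern class is $c_K(V) = \xi^K$. Combining this with the previous paragraph yields the claim.

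The only conceptual subtlety is ensuring that the section $s$, which a priori is only a continuous (or $L^2_k$-regular) section over the infinite-dimensional configuration space quotient, descends to a genuine smooth section of a finite-dimensional vector bundle over $\mathbb{P}U$ to which Thom's construction applies; but this has already been arranged in the paragraphs preceding the proposition by passing to the finite-dimensional stabilization $U\to\mathscr{J}_X$ via $\psi$. With that finite-dimensional model in hand, the corollary is essentially a one-line consequence of transversality plus the rank-one computation of $c_K(\mathscr{O}(-1)^{\oplus K})$.
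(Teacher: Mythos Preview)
Your proof is correct and follows essentially the same route as the paper: invoke transversality of $s$ to identify $PD[\mathcal{N}]$ with the Euler class (equivalently, top Chern class) of $\mathscr{O}_{\mathbb{P}U}(-1)\otimes\underline{\mathbb{C}}^K$, and then use the Whitney sum formula for $\mathscr{O}(-1)^{\oplus K}$ to obtain $\xi^K$. The paper's argument is the same one-line computation, so there is nothing further to compare.
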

\begin{proof}
    The manifold $\mathcal{N}$ is the transverse intersection of a section of the bundle $\mathscr{O}_{\mathbb{P}U}(1)\otimes\underline{\mathbb{C}}^K$ with the zero section; hence, $$PD[\mathcal{N}] = e\left(\mathscr{O}_{\mathbb{P}U}(-1)\otimes\underline{\mathbb{C}}^K\right) = c_K\left(\mathscr{O}_{\mathbb{P}U}(-1)\otimes\underline{\mathbb{C}}^K\right)=\xi^K.$$
\end{proof}

\subsection{The tangent bundle of $\mathcal{N}$}
\begin{lemma}
    The pullback of $\mathscr{O}_{\mathbb{P}U}(1)\otimes p^*\ind\delbar_\mathscr{L}\to\mathbb{P}U$ via the inclusion $\mathcal{N}\hookrightarrow\mathbb{P}U$ is stably isomorphic to the tangent bundle $T\mathcal{N}\to \mathcal{N}.$ 
\end{lemma}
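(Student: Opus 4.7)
The plan is to compute $T\mathcal{N}$ by combining two standard short exact sequences: the normal-bundle sequence for the embedding $\mathcal{N}\hookrightarrow\mathbb{P}U$, and the relative Euler sequence for the projective bundle $\mathbb{P}U\to\mathscr{J}_X$. By the proposition just proved, $\mathcal{N}$ is the transverse zero locus of a section of $V:=\mathscr{O}_{\mathbb{P}U}(1)\otimes\underline{\mathbb{C}}^K$, so the normal bundle $N_{\mathcal{N}/\mathbb{P}U}$ is canonically $V|_{\mathcal{N}}$, giving
\[
T\mathbb{P}U\big|_{\mathcal{N}} \;\cong\; T\mathcal{N}\oplus \bigl(\mathscr{O}_{\mathbb{P}U}(1)^{\oplus K}\bigr)\big|_{\mathcal{N}}.
\]

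Next I would analyse $T\mathbb{P}U$ itself. The tautological Euler sequence $0\to\underline{\mathbb{C}}\to p^*U\otimes\mathscr{O}_{\mathbb{P}U}(1)\to T_{\mathbb{P}U/\mathscr{J}_X}\to 0$ together with the relative–absolute tangent sequence $0\to T_{\mathbb{P}U/\mathscr{J}_X}\to T\mathbb{P}U\to p^*T\mathscr{J}_X\to 0$ combine to give the stable identity
\[
T\mathbb{P}U\oplus\underline{\mathbb{C}}\;\cong\; \bigl(p^*U\otimes\mathscr{O}_{\mathbb{P}U}(1)\bigr)\oplus p^*T\mathscr{J}_X.
\]
Since $\mathscr{J}_X$ is a complex torus, $T\mathscr{J}_X$ is holomorphically trivial of rank $g$, so $p^*T\mathscr{J}_X\cong\underline{\mathbb{C}}^g$.

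The remaining step is to recognise $p^*U-\underline{\mathbb{C}}^K$ as $p^*\ind\delbar_\mathscr{L}$ in K-theory. By construction, fibrewise the operator $\delbar_\mathscr{L}\oplus\psi$ is surjective with kernel $U_\mathscr{L}$, so the four-term exact sequence
\[
0\to\ker\delbar_\mathscr{L}\to U_\mathscr{L}\to\mathbb{C}^K\to\Coker\delbar_\mathscr{L}\to 0
\]
forces $[U]-[\underline{\mathbb{C}}^K]=\ind\delbar_\mathscr{L}$ in $K^0(\mathscr{J}_X)$. Substituting into the previous identity, restricting to $\mathcal{N}$, and cancelling the $\mathscr{O}_{\mathbb{P}U}(1)^{\oplus K}|_{\mathcal{N}}$ summand from both sides produces the desired equality $T\mathcal{N}=\mathscr{O}_{\mathbb{P}U}(1)\otimes p^*\ind\delbar_\mathscr{L}|_{\mathcal{N}}$ in $K^0(\mathcal{N})$, up to trivial summands, which is exactly the claimed stable isomorphism.

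The only substantive point is the K-theoretic identification of $U$ with a stabilisation of the index bundle; this is the standard Atiyah--Singer finite-dimensional reduction and is essentially immediate here because $\psi$ was chosen precisely so that $\delbar_\mathscr{L}\oplus\psi$ is fibrewise surjective. Every other step is formal manipulation of short exact sequences of complex vector bundles, so I do not anticipate any serious obstacle in executing this plan.
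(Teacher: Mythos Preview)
Your argument is correct but follows a genuinely different route from the paper's. The paper works directly in the infinite-dimensional configuration space ${\mathscr{B}^c}^*$: it identifies $T\mathcal{N}$ with $\Ker dF$ for the map $F(A,\alpha)=\overline{\partial}_{AB}\alpha$, computes $dF_{(A,\alpha)}(a,\beta)=\overline{\partial}_A\beta+a^{0,1}\alpha$, homotopes away the zeroth-order term $a^{0,1}\alpha$ so that $T\mathcal{N}$ becomes the index bundle of the family $\delbar_\mathscr{L}$ parametrised by $\mathcal{N}$, and then checks that this family is exactly the restriction of $\mathscr{O}_{\mathbb{P}U}(1)\otimes p^*P$ to $\mathcal{N}\times X$.

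Your approach, by contrast, is entirely finite-dimensional once the embedding $\mathcal{N}\hookrightarrow\mathbb{P}U$ has been set up: the normal-bundle sequence, the relative Euler sequence, and the triviality of $T\mathscr{J}_X$ reduce everything to the K-theoretic identity $[U]-[\underline{\mathbb{C}}^K]=\ind\delbar_\mathscr{L}$, which the paper in fact states verbatim immediately after this lemma. What you gain is a cleaner, more algebraic computation that avoids the homotopy of Fredholm families; what the paper's argument gains is a direct geometric picture of $T\mathcal{N}$ as the linearised moduli problem, in keeping with the gauge-theoretic origin of $\mathcal{N}$. Both proofs ultimately hinge on the same stabilisation identity for $U$, so they are equivalent in substance though different in flavour.
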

\begin{proof}
    Let $\mathscr{C}^*$ consist of all elements of $\mathscr{C}$ except those in which the section is identically zero. We define ${\mathscr{B}^c}^* = \mathscr{C}^*/\mathscr{G}^c.$ Note that projection ${\mathscr{B}^c}^*\to \mathscr{J}_X = \mathscr{A}(L)/\mathscr{G}^c$ gives ${\mathscr{B}^c}^*$ the structure of an infinite-dimensional projective space bundle over the Jacobian.

    The map $(A, \alpha) \mapsto \overline{\partial}_{AB}\alpha$ is a smooth map of Banach manifolds $F:\mathscr{B}^*\to L^2_{k-1}\Omega^{0,1}(L\otimes E)$ whose linearization is Fredholm, by ellipticity. Fix $(A, \alpha)\in{\mathscr{B}^c}^*$ and consider the linearization $dF: T_{A, \alpha}{\mathscr{B}^c}^* \to T_{F(A, \alpha)} L^2_{k-1}\Omega^{0,1}(E\otimes L).$

    Suppose we pick the background connection $B\in\mathscr{A}(E)$ generically, such that $\mathcal{N}$ is cut out transversely \cite{Doan}. Then, the tangent bundle of $\mathcal{N}$ is canonically isomorphic to the bundle composed of $\Ker dF$ at each $(A, \alpha) \in \mathcal{N},$ since $dF$ is surjective by the transversality assumption. This surjectivity also guarantees that this bundle $\Ker dF$ is equal to $\ind dF$ in the K-theory of $\mathcal{N}.$ Now, $$dF_{(A, \alpha)}(a, \beta) = \overline{\partial}_A\beta + a^{0,1}\alpha,$$ and the term $a^{0,1}\alpha$ may be homotoped out without affecting the class of $\ind dF \in K(\mathcal{N}).$ Hence, $T\mathcal{N}$ is stably isomorphic to the index bundle $\ind \delbar_\mathcal{N}$ of Dolbeault operators parameterized by $\mathcal{N}.$ 
    
    We must construct this index bundle and verify it agrees with $\mathscr{O}_{\mathbb{P}U}(1)\otimes p^*P$. The family $P\to\mathscr{J}_X\times X$ pulls back to the family $p^*P\to U\times X.$ Upon taking the projectivization of $U,$ this family descends to a family $\mathscr{O}_{\mathbb{P}U}(1)\otimes p^*P\to \mathbb{P}U\times X.$ Restricting $\mathscr{O}_{\mathbb{P}U}(1)\otimes p^*P$ to $\mathcal{N}\subset \mathbb{P}U,$ we get a family parameterizing Dolbeault operators on $L\otimes E\to X$ whose index bundle clearly recovers $\ind \delbar_\mathcal{N} \in K(\mathcal{N}).$
\end{proof}

The bundle $U\to\mathscr{J}_X$ is related to the index bundle of the family of operators $\ind\delbar_\mathscr{L}$ by: $$\ind\delbar_\mathscr{L} = [U] - [\underline{\mathbb{C}}^K]\in K(\mathscr{J}_X).$$

\begin{lemma}\label{km}
    The Chern classes of the index bundle $\ind\delbar_\mathscr{L}\to\mathscr{J}_X$ (and hence of $U$) are given as: $$c_r(U) = c_r(\ind\delbar_\mathscr{L}) = (-1)^r\frac{N^r}{r!}\Theta^r.$$
\end{lemma}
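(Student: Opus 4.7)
The plan is to apply Grothendieck--Riemann--Roch to the family of Dolbeault operators and then convert the resulting Chern character into Chern classes via Newton's identities.

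First, I would identify the index bundle as a pushforward: under the projection $\pi:\mathscr{J}_X\times X\to\mathscr{J}_X$, the holomorphic bundle $P$ on $\mathscr{J}_X\times X$ (which the paper identified as the Poincar\'e line bundle $\mathcal{P}_d$ tensored with the pullback of $\mathscr{E}$) satisfies $\ind\overline{\partial}_\mathscr{L}=\pi_{!}P$ in $K(\mathscr{J}_X)$. GRR then gives
$$\ch(\ind\overline{\partial}_\mathscr{L}) = \pi_{*}\bigl(\ch(P)\cdot\pi_X^{*}\Td(TX)\bigr).$$

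Next, I would compute the right-hand side. Decompose $c_1(\mathcal{P}_d)=d\omega+\gamma$, where $\omega\in H^2(X)$ is the point class and $\gamma\in H^1(\mathscr{J}_X)\otimes H^1(X)$ is the K\"unneth-mixed piece. Because $\omega^{2}=0$, $\omega\gamma=0$, and $\gamma^{3}=0$ (the last from $H^3(X)=0$), exponentiating collapses to $\ch(\mathcal{P}_d)=1+d\omega+\gamma+\tfrac{1}{2}\gamma^{2}$. Combining with $\pi_X^{*}\ch(\mathscr{E})=N+D\omega$ and $\pi_X^{*}\Td(TX)=1+(1-g)\omega$, the only $H^2(X)$-terms that survive in the product are $\bigl(D+Nd+N(1-g)\bigr)\omega$ and $\tfrac{N}{2}\gamma^{2}$. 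The essential geometric input is the classical identity $\pi_{*}(\gamma^{2})=-2\Theta$, which expresses that $\mathcal{P}_d$ realizes the principal polarization. Pushing forward yields
$$\ch(\ind\overline{\partial}_\mathscr{L}) = R - N\Theta.$$
Since $[U]=[\ind\overline{\partial}_\mathscr{L}]+[\underline{\mathbb{C}}^{K}]$ in $K(\mathscr{J}_X)$, we obtain $\ch(U)=(R+K)-N\Theta$; in particular $\ch_k(U)=0$ for every $k\geq 2$.

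Finally, to pass from the Chern character to the Chern classes, I would invoke the splitting principle together with the generating identity
$$\sum_{r\geq 0}c_r(U)\,t^{r}=\exp\Bigl(\sum_{k\geq 1}(-1)^{k-1}\tfrac{p_k}{k}\,t^{k}\Bigr),$$
where $p_k=\sum_i x_i^{k}$ are the power sums in the formal Chern roots. The vanishing of $\ch_k(U)$ for $k\geq 2$ forces $p_k=0$ for $k\geq 2$, while $p_1=-N\Theta$, so the right-hand side collapses to $\exp(-N\Theta\cdot t)=\sum_r(-1)^{r}\tfrac{N^r}{r!}\Theta^{r}\,t^{r}$. Reading off the coefficient of $t^{r}$ gives $c_r(U)=(-1)^{r}\tfrac{N^r}{r!}\Theta^{r}$, as required. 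The computation is essentially mechanical once GRR is in place; the only nontrivial ingredient is the pushforward formula $\pi_{*}(\gamma^{2})=-2\Theta$, and the step that deserves care is the bookkeeping of K\"unneth components and signs throughout.
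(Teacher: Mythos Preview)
Your proof is correct and follows essentially the same route as the paper: compute $\ch(\ind\overline{\partial}_\mathscr{L})=R-N\Theta$ via the families index theorem/GRR applied to the Poincar\'e bundle, then convert to Chern classes. The only cosmetic differences are that the paper first decomposes $\mathscr{E}$ as a sum of line bundles (via homotopy invariance of the index) before applying the index theorem, whereas you use $\ch(\mathscr{E})=N+D\omega$ directly; and the paper says ``compute the Chern classes inductively'' where you invoke Newton's identities explicitly.
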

\begin{proof}
    We follow the arguments of {\cite[Proposition 9.1]{GTES2}}, which is essentially a proof of this lemma in the case $N = 2.$
    
    By the homotopy invariance of the index, we may assume that $\mathscr{E}$ is holomorphically decomposable as $\mathscr{O}_X^{\oplus (N-1)}\oplus\eta,$ where $\eta$ is a line bundle of degree $D$, so that in $K(\mathscr{J}_X)$, the index $\ind\overline{\partial}_{\mathscr{L}}$ will be $(N-1)\tau_d+\tau_{d+D},$ where we define $\tau_m$ define to be the index of the family of $\overline{\partial}$-operators on the degree $m$ Poincar\'e line bundle $\mathscr{P}_m\to X\times\mathscr{J}_X.$ Then, we apply the families Atiyah-Singer index theorem of \cite{AS4}, which tells us a formula for the Chern characters: $$\ch(\tau_m)\Td(\mathscr{J}_X) = \ch(\mathscr{P}_m)\Td(X\times\mathscr{J}_X)/[X].$$

    We first compute the Todd classes. $\mathscr{J}_X$ is an abelian variety, so its tangent bundle is trivial, thus $\Td(\mathscr{J}_X) = 1.$ We know that $\Td(X) = 1 - (g-1)\sigma,$ where $\sigma$ is the positive generator of $H^2(X;\mathbb{Z}).$

    Letting $\alpha_i, \beta_i$ be the standard generators for $H^1(X;\mathbb{Z})$ and $a_i, b_i$ be the corresponding generators of $H^1(\mathscr{J}_X;\mathbb{Z}).$ Then, considering the restrictions of $\mathscr{P}_m$ to a slice of each factor, we can compute that: $$c_1(\mathscr{P}) = m\sigma + \sum_i\left(a_i\smile\alpha_i + b_i\smile\beta_i\right).$$ The principal polarization of $\mathscr{J}_X$ satisfies $\Theta = \sum_i a_i\smile b_i$. The formula for the Chern character of a line bundle yields: $$\ch(\mathscr{P}_m) = 1 + c_1 + \frac12c_1^2+\dots = 1 + m\sigma + \sum_i\left(a_i\smile\alpha_i + b_i\smile\beta_i\right) - \Theta\smile\sigma.$$

    Now, we get that: \begin{equation*}
    \begin{split}
    \ch(\operatorname{ind}\overline{\partial}_\mathscr{L}) &= (N-1)\ch(\tau_d)+\ch(\tau_{d+D}) \\
    &=\left((N-1)\ch(\mathscr{P}_d)+\ch(\mathscr{P}_{d+D})\right)(1 - (g-1)\sigma)/[\Sigma] \\
    &=N(d - g + 1)+D - N\Theta \\
    &= R - N\Theta \\
    \end{split}
    \end{equation*}

    The Chern character vanishes in degree higher than 1, so we can compute the Chern classes inductively, which gives us: $$c_r(\operatorname{ind}\overline{\partial}_\mathscr{L}) = (-1)^r\frac {N^r}{r!}\Theta^{r},$$ as desired.
\end{proof}

For the next lemma, we must take care to define an appropriate generalization of binomial coefficients when the entries can be negative. Suppose $j\geq 0.$ We will define ${{n}\choose{j}} = 1$ if $j=0$ and: $${{n}\choose{j}} := \frac{n}j\cdot\frac{n-1}{j-1}\dots\cdot\frac{n-j+1}{1}=(-1)^j{{j-n-1}\choose{j}}$$ if $j>0.$ This way, the formal power series expansion $(1+x)^n = \sum_{j=0}^\infty{{n}\choose{j}}x^j$ holds.

\begin{lemma}\label{cherntangent}
   The Chern classes of $\mathscr{O}_{\mathbb{P}U}(1)\otimes p^*\ind\delbar_\mathscr{L}$ are given as: $$c_k(\mathscr{O}_{\mathbb{P}U}(1)\otimes p^*\ind\delbar_\mathscr{L}) = \sum_{l=0}^{\min(k, g)} {{R - l}\choose{k-l}}(-1)^l\frac{N^l}{l!}\Theta^l\xi^{k-l}$$
\end{lemma}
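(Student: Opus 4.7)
The strategy is to reduce to the classical formula for the Chern classes of a tensor product of a line bundle and a vector bundle, then substitute the Chern classes of $\ind\delbar_\mathscr{L}$ from Lemma \ref{km}. For a line bundle $L$ with $c_1(L)=\ell$ and a vector bundle $V$ of rank $r$, the splitting principle gives
\begin{equation*}
c_k(L\otimes V) = \sum_{j=0}^{k}\binom{r-j}{k-j}\ell^{k-j} c_j(V).
\end{equation*}
I would apply this with $L=\mathscr{O}_{\mathbb{P}U}(1)$, $V=p^*\ind\delbar_\mathscr{L}$, and $r=R$, then insert $c_j(V) = (-1)^j \frac{N^j}{j!}\Theta^j$ from Lemma \ref{km}. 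The upper limit $j\leq\min(k,g)$ is automatic from $\Theta^{g+1}=0$ on the complex $g$-dimensional abelian variety $\mathscr{J}_X$, so terms with $j>g$ drop out of the sum.

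The one subtlety is that $\ind\delbar_\mathscr{L}$ is only a virtual bundle, so the tensor-product formula must be interpreted in $K$-theory. To justify the extension, I would write $\ind\delbar_\mathscr{L}=[U]-[\underline{\mathbb{C}}^K]$ in $K(\mathscr{J}_X)$, which gives
\begin{equation*}
c(\mathscr{O}_{\mathbb{P}U}(1)\otimes p^*\ind\delbar_\mathscr{L}) = \frac{c(\mathscr{O}_{\mathbb{P}U}(1)\otimes p^*U)}{(1+c_1(\mathscr{O}_{\mathbb{P}U}(1)))^K},
\end{equation*}
and apply the splitting formula to the genuine rank-$(R+K)$ bundle $U$ in the numerator. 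Expanding the denominator as a formal power series and collecting terms of cohomological degree $2k$ recovers the claimed sum, with the generalized binomial coefficients $\binom{R-l}{k-l}$ arising naturally from the formal identity $(1+x)^{R-l}=\sum_m\binom{R-l}{m}x^m$ valid for possibly negative $R-l$. This is precisely why the extended binomial convention defined in the paragraph preceding the lemma is needed.

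The main hurdle is sign bookkeeping: one must track consistently the relationship between $\xi$, $c_1(\mathscr{O}_{\mathbb{P}U}(1))$, and the factor $(-1)^j$ coming from Lemma \ref{km}. Once that is done, the identity follows by a direct comparison of coefficients of $\Theta^l\xi^{k-l}$ on the two sides.
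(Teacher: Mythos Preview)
Your proposal is correct and follows essentially the same route as the paper: both invoke the splitting-principle formula $c_k(L\otimes V)=\sum_j\binom{r-j}{k-j}c_1(L)^{k-j}c_j(V)$, handle the virtual bundle via $\ind\delbar_\mathscr{L}=[U]-[\underline{\mathbb{C}}^K]$, and reduce the identity to a Vandermonde-type convolution. The only cosmetic difference is that the paper verifies the Whitney relation by multiplying the claimed expression by $c(\mathscr{O}_{\mathbb{P}U}(1)\otimes\underline{\mathbb{C}}^K)$ and checking it matches $c(\mathscr{O}_{\mathbb{P}U}(1)\otimes p^*U)$, whereas you phrase it as dividing and expanding $(1+c_1(\mathscr{O}_{\mathbb{P}U}(1)))^{-K}$; these are the same computation.
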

\begin{proof}
    In general, if $L$ is a complex line bundle over a space $Z$ and $V\to Z$ is a rank $r$ complex vector bundle, we have the following formula from the splitting principle: \begin{equation*}
        \begin{split}
            c(L\otimes V) &= \sum_{k=0}^r \sum_{l=0}^k {{r - l}\choose{k - l}}c_1(L)^{k-l}c_l(E). \\
        \end{split}
    \end{equation*} 
    
    From the description $\ind\delbar_\mathscr{L} = [U] - [\underline{\mathbb{C}}^K]\in K(\mathscr{J}_X)$ and the Whitney product formula, the Chern classes of $\mathscr{O}_{\mathbb{P}U}(1)\otimes p^*\ind\delbar_\mathscr{L}$ are uniquely determined by: $$c\left(\mathscr{O}_{\mathbb{P}U}(1)\otimes p^*\ind\delbar_\mathscr{L}\right)c\left(\mathscr{O}_{\mathbb{P}U}(1)\otimes \underline{\mathbb{C}}^K\right) = c\left(\mathscr{O}_{\mathbb{P}U}(1)\otimes p^*U\right).$$ Hence, it suffices for the proof to verify the following identity: $$\left(\sum_{k=0}^{R} \sum_{l=0}^k {{R - l}\choose{k - l}}\xi^{k-l}c_l(p^*U)\right)\left(\sum_{j=0}^{K}{{K}\choose{j}}\xi^{j}\right) = \sum_{k=0}^{R+K} \sum_{l=0}^k {{R+K - l}\choose{k -l}}\xi^{k-l}c_l(p^*U).$$ Using the identity $\sum_{j=0}^{n}{{n}\choose{j}}{{p}\choose{m-j}} = {{n+p}\choose{m}},$ which holds true for $p<0,$ we have: \begin{equation*}
        \begin{split}
            &\left(\sum_{k=0}^{R} \sum_{l=0}^k {{R - l}\choose{k - l}}\xi^{k-l}c_l(p^*U)\right)\left(\sum_{j=0}^{K}{{K}\choose{j}}\xi^{j}\right)\\ =& \sum_{k=0}^{R} \sum_{l=0}^k\sum_{j=0}^{K}{{K}\choose{j}}{{R - l}\choose{k - l}}\xi^{j+k-l}c_l(p^*U) \\
            =& \sum_{m=0}^{R+K} \sum_{l=0}^m\left(\sum_{j=0}^{K}{{K}\choose{j}}{{R - l}\choose{m-j - l}}\right)\xi^{m-l}c_l(p^*U) \\
            =& \sum_{m=0}^{R+K} \sum_{l=0}^m{{R+K - l}\choose{m -l}}\xi^{m-l}c_l(p^*U) \\
        \end{split}
    \end{equation*}
\end{proof}

\begin{lemma}
   When $r\geq0,$ we have the following formula for the pushforwards of powers of $\xi$:\begin{equation}\label{pushforward} p_*\left(\xi^{R+K-1+r}\right) = \frac{N^r}{r!}\Theta^r\end{equation}
\end{lemma}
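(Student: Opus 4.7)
The plan is to identify $p_*(\xi^{R+K-1+r})$ with the $r$-th Segre class $s_r(U)$, and then compute those Segre classes directly from Lemma \ref{km}.

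For the first step, I would use that $p : \mathbb{P}U \to \mathscr{J}_X$ is a projective bundle whose fibers have complex dimension $R + K - 1$. Fiber-integration gives the base case $p_*(\xi^{R+K-1}) = 1$, with $p_*(\xi^k) = 0$ for $k < R+K-1$ by degree reasons. To reach higher powers I would multiply the Leray-Hirsch relation of Equation \ref{leray-hirsch} by $\xi^{r-1}$ and apply the projection formula, producing
$$p_*\bigl(\xi^{R+K-1+r}\bigr) \;=\; -\sum_{i=1}^{r} c_i(U)\, p_*\bigl(\xi^{R+K-1+(r-i)}\bigr),$$
where contributions with $i > r$ drop out since $p_*$ kills powers of $\xi$ below $R+K-1$. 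This is precisely the recursion defining the Segre classes of $U$ via $s(U)\,c(U) = 1$, with the correct base case $s_0(U) = 1$. Induction on $r$ then gives $p_*(\xi^{R+K-1+r}) = s_r(U)$.

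For the second step I would plug in Lemma \ref{km}. That lemma gives $c_l(U) = (-1)^l \frac{N^l}{l!}\Theta^l$, so summing yields the closed form
$$c(U) \;=\; \sum_{l \ge 0} \frac{(-N\Theta)^l}{l!} \;=\; \exp(-N\Theta),$$
and therefore the Segre class is
$$s(U) \;=\; c(U)^{-1} \;=\; \exp(N\Theta) \;=\; \sum_{r \ge 0}\frac{N^r}{r!}\Theta^r.$$
Reading off the degree-$2r$ component gives $s_r(U) = \frac{N^r}{r!}\Theta^r$, which matches the claim.

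There is no substantive obstacle here: the whole argument is a direct application of the standard projective-bundle pushforward formula combined with the exponential form of $c(U)$ that falls out of Lemma \ref{km}. The only point to handle with a little care is keeping the sign convention for $\xi = c_1(\mathscr{O}_{\mathbb{P}U}(-1))$ consistent with the form of the Leray-Hirsch relation used in Equation \ref{leray-hirsch}, so that the recursion truly reproduces the Segre, rather than some signed, class.
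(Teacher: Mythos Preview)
Your proof is correct and takes essentially the same approach as the paper: both establish the base case $r=0$ by fiber integration, derive the identical recursion from the Leray--Hirsch relation of Equation~\ref{leray-hirsch}, and feed in the Chern classes from Lemma~\ref{km}. The only difference is cosmetic: the paper carries out the induction directly, simplifying via the identity $\sum_{i=0}^r(-1)^i/(i!(r-i)!)=0$, whereas you recognize the recursion as the defining relation $s(U)c(U)=1$ for Segre classes and then invert $c(U)=e^{-N\Theta}$ to get $s(U)=e^{N\Theta}$ in one stroke.
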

\begin{proof}
    We proceed by induction. The case $r=0$ follows from the fact that on each fiber $\mathbb{P}(U\vert_{\mathscr{L}}),$ the form $\xi$ restricts to $-c_1(S),$ where $S$ is the universal sub-bundle. We then appeal to the well-known fact (see for instance \cite[p. 91]{ACGH}) that $\int_{\mathbb{CP}^{M}}(-c_1(S))^{M} = 1$ for all $M.$ From Equation \ref{leray-hirsch} we derive: $\xi^{R+K} = -\sum_{i=1}^{K+R}c_i(U)\xi^{K+R-i}.$ Hence we may compute our answer by induction and the push-pull formula:\begin{equation*}
        \begin{split}
            p_*\left(\xi^{R+K+r-1}\right) &= -p_*\left(\sum_{i=1}^{K+R}c_i(U)\xi^{K+R+r-1-i}\right) \\
            &= -\sum_{i=1}^{K+R}c_i(U)p_*\left(\xi^{K+R+r-1-i}\right) \\
            &= -\sum_{i=1}^{r}(-1)^i\frac{N^i}{i!}\Theta^ip_*\left(\xi^{K+R+r-1-i}\right) \\
            &= -\sum_{i=1}^{r}(-1)^i\frac{N^r}{i!(r-i)!}\Theta^r. \\
        \end{split}
    \end{equation*} The expression $-\sum_{i=1}^{r}(-1)^i\frac{N^r}{i!(r-i)!}\Theta^r$ simplifies to $\frac{N^r\Theta^r}{r!}$ upon using the identity $\sum_{i=1}^{r}(-1)^i\frac{1}{i!(r-i)!}=-1$.
\end{proof}

We are now ready to prove the formula from Theorem \ref{chi}: $$\chi(\mathcal{N}) = (-1)^{\dim_\mathbb{C}\mathcal{N}}N^g{{2g-2}\choose{\dim_\mathbb{C}\mathcal{N}}}.$$\begin{proof}[Proof of Theorem \ref{chi}]
    By Corollary \ref{manifold}, \begin{align*}\chi(\mathcal{N}) &= \int_\mathcal{N}c_{\dim_\mathbb{C}\mathcal{N}}(T\mathcal{N}) = \int_{\mathbb{P}U}PD[\mathcal{N}]c_{\dim_\mathbb{C}\mathcal{N}}(T\mathcal{N}). \\ \intertext{Using the formula for $PD[\mathcal{N}]$ from Corollary \ref{pd} and that for $c_{\dim_\mathbb{C}\mathcal{N}}(T\mathcal{N})$ from Lemma \ref{cherntangent}, we write:} \chi(\mathcal{N}) &= \sum_{l=0}^g{{R-l}\choose{R+g-1-l}}(-1)^{l}\frac{N^{l}}{l!}\int_{\mathbb{P}U}\Theta^{l}\xi^{K+R+g-1-l}. \\ \intertext{We may compute this integral over $\mathbb{P}U$ by first integrating along the fibers of $p:\mathbb{P}U\to\mathscr{J}_X$ and then integrating over $\mathscr{J}_X.$ Applying Equation \ref{pushforward} to compute the pushforwards of powers of $\xi,$ we get:}
        \chi(\mathcal{N}) &= \sum_{l=0}^g{{R-l}\choose{R+g-1-l}}(-1)^{l}\frac{N^{l}}{l!}\int_{\mathscr{J}_X}\Theta^{l}p_*\left(\xi^{K+R+g-1-l}\right)\\
        &= \sum_{l=0}^g{{R-l}\choose{R+g-1-l}}(-1)^{l}\frac{N^{g}}{l!(g-l)!}\int_{\mathscr{J}_X}\Theta^{g}.\\
    \intertext{The integral $\int_{\mathscr{J}_X}\Theta^g = g!$ and the identity $\sum_{j=0}^{n}{{n}\choose{j}}{{p}\choose{m-j}} = {{n+p}\choose{m}}$ complete the computation:}
        \chi(\mathcal{N})&= \sum_{l=0}^g(-1)^{l}{{R-l}\choose{R+g-1-l}}{{g}\choose{l}}N^g\\
        &= \sum_{l=0}^{g}(-1)^{\dim_\mathbb{C}\mathcal{N}}{{g-2}\choose{\dim_\mathbb{C}\mathcal{N} - l}}{{g}\choose{l}}N^g\\
        &= (-1)^{\dim_\mathbb{C}\mathcal{N}}N^g{{2g-2}\choose{\dim_\mathbb{C}\mathcal{N}}}.
\end{align*}\end{proof}

\section{Techniques from Algebraic Geometry}

\subsection{Spectral curves}

Following \cite{Hitchin} and particularly \cite{BNR}, we uncover much about the geometry of $\mathcal{N}^d$ by taking $\mathscr{E} = \pi_*\xi$ where $\pi:Y\to X$ is a branched cover of Riemann surfaces and $\xi\to Y$ is a holomorphic line bundle. $Y$ will be called a \emph{spectral curve} of $X.$

To define $Y,$ let $\mathscr{U}_X(N, D)$ be the moduli space of semistable vector bundles on $X$ of rank $N$ and degree $D$. We will use $g_X$ to denote the genus of $X.$ For $k\in\{1,\dots,N\},$ let $s_k \in H^0_X(K_X^{k}),$ and let $p:Q:=\mathbb{P}(\mathscr{O}\oplus K_X^{-1})\to X$ be the projection. We will generate two sections of vector bundles over $Q$. Note that $p_*(\mathscr{O}_{Q}(1))\cong\mathscr{O}\oplus K_X^{-1}$. Let $y\in H^0_Q(\mathscr{O}_Q(1))$ be the section corresponding to the constant section $1\in H^0_X(\mathscr{O}_X)$ under this isomorphism. Also, the projection formula tells us $p_*(p^*K_X\otimes \mathscr{O}_Q(1))\cong K_X\oplus\mathscr{O}_X;$ now let $x\in H^0_Q(p^*K_X\otimes\mathscr{O}_Q(1))$ be the section corresponding to the constant section $1\in H^0_X(\mathscr{O}_X)$ under this isomorphism. Let $Y$ be the zero locus of the following section: $$x^N+p^*s_1\otimes y\otimes x^{N-1}+\dots+p^*s_N\otimes y^N\in H^0_Q(p^*K_X^N\otimes\mathscr{O}(N)),$$ and let $\pi:Y\to X$ be the restriction to $Y\subset Q$ of $p:Q\to X.$ It is not difficult to show that $Y$ is smooth and connected for generic choices of the sections $s_k$ and that in this case, $\pi:Y\to X$ is a finite map which is generically $N$-sheeted.

\begin{theorem}[{\cite[Theorem 1]{BNR}}]
For any integer $D,$ the rational map $\pi_*:\operatorname{Pic}^\delta(Y)\dashrightarrow \mathscr{U}_X(N, D)$, which takes a line bundle $\xi\in\operatorname{Pic}^\delta(Y)$ to the direct image $\pi_*\xi$ of its sheaf of sections, is dominant and defined outside a subset of codimension $\geq 2$, where $\delta = D - \deg\pi_*(\mathscr{O}_Y).$
\end{theorem}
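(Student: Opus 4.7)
The plan is to split the statement into three tasks: (i) verify that $\pi_\ast$ produces a locally free sheaf of the right rank and degree on all of $\operatorname{Pic}^\delta(Y)$; (ii) establish dominance via a dimension count combined with a tangent-space calculation at a single point; and (iii) bound the codimension of the indeterminacy locus, i.e. of the unstable stratum of the pushforward.

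For (i), since $Y$ and $X$ are smooth curves and $\pi$ has finite fibers, $\pi$ is automatically flat, so $\pi_\ast\xi$ is locally free of rank $N$ for every $\xi\in\operatorname{Pic}(Y)$. The projection formula gives $R^i\pi_\ast\xi=0$ for $i>0$, and comparing Euler characteristics on $X$ and $Y$ yields $\deg\pi_\ast\xi=\deg\xi+\deg\pi_\ast\mathscr{O}_Y$, so the normalization $\delta=D-\deg\pi_\ast\mathscr{O}_Y$ is precisely what places $\pi_\ast\xi$ in rank $N$, degree $D$. Semistability is an open condition on the base, so $\pi_\ast$ restricts to an actual morphism on an open set $V\subseteq\operatorname{Pic}^\delta(Y)$.

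For (ii), an adjunction computation on the ruled surface $Q=\mathbb{P}(\mathscr{O}\oplus K_X^{-1})$ applied to the divisor $Y\subset Q$ gives $g_Y=N^2(g_X-1)+1=\dim\mathscr{U}_X(N,D)$, so dominance reduces to showing that the differential of $\pi_\ast$ is an isomorphism at one point. The tangent map at $\xi_0\in V$ with $\pi_\ast\xi_0$ stable is the natural map $H^1(Y,\mathscr{O}_Y)\to H^1(X,\mathscr{E}nd(\pi_\ast\xi_0))$ induced by the inclusion $\pi_\ast\mathscr{O}_Y\hookrightarrow\mathscr{E}nd(\pi_\ast\xi_0)$ arising from the $\pi_\ast\mathscr{O}_Y$-module structure on $\pi_\ast\xi_0$. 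The long exact sequence in cohomology together with a Serre-dual vanishing statement for the cokernel sheaf reduces injectivity to a generic statement that can be verified on an explicit decomposable $\xi_0$ (e.g.\ by exhibiting a $\xi_0$ with sufficiently generic monodromy around the branch points).

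For (iii), I would stratify the complement $\operatorname{Pic}^\delta(Y)\setminus V$ by the Harder--Narasimhan type of $\pi_\ast\xi$. A destabilizing subsheaf $\mathscr{F}\subset\pi_\ast\xi$ of rank $r<N$ corresponds, by adjunction $\operatorname{Hom}_X(\mathscr{F},\pi_\ast\xi)\cong\operatorname{Hom}_Y(\pi^\ast\mathscr{F},\xi)$, to a nonzero map $\pi^\ast\mathscr{F}\to\xi$ on $Y$. The moduli of such $\mathscr{F}$ has dimension at most $r^2(g_X-1)+1$, and given $\mathscr{F}$ the set of $\xi$ receiving such a map forms a subvariety of $\operatorname{Pic}^\delta(Y)$ whose dimension is controlled by $\deg\xi-\deg\pi^\ast\mathscr{F}$. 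Summing, each stratum will be checked to have codimension at least $2$. The main obstacle will be this last step: the borderline strata where $r$ is close to $N$ and the slope $\mu(\mathscr{F})$ is close to $\mu(\pi_\ast\xi)$ make the naive dimension estimate tight, and one must refine it by observing that $\pi^\ast\mathscr{F}\hookrightarrow\xi$ forces $\xi$ to lie in a proper translate of an image of an Abel--Jacobi map, cutting the dimension by an additional amount coming from the ramification divisor of $\pi$.
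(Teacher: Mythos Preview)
The paper does not prove this statement; it is quoted verbatim as \cite[Theorem~1]{BNR} and used as a black box. So there is no ``paper's own proof'' to compare against. That said, let me comment on your sketch, since it diverges in an important way from the argument actually given in Beauville--Narasimhan--Ramanan.

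Your step (ii) contains a circularity. To compute the differential of $\pi_\ast$ at $\xi_0$ and identify it with $H^1(Y,\mathscr{O}_Y)\to H^1(X,\mathscr{E}nd(\pi_\ast\xi_0))$, you need $\pi_\ast\xi_0$ to be \emph{stable} (otherwise the target is not the tangent space of $\mathscr{U}_X(N,D)$). But the existence of a single $\xi_0$ with $\pi_\ast\xi_0$ stable is essentially what you are trying to prove. Your suggestion to take ``an explicit decomposable $\xi_0$'' does not make sense: $\xi_0$ is a line bundle, and if you mean $\xi_0=\pi^\ast L$ then $\pi_\ast\xi_0\cong L\otimes\pi_\ast\mathscr{O}_Y$ splits and is never stable. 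The phrase ``generic monodromy around the branch points'' does not pin down anything concrete. BNR avoid this circularity by running the argument in the opposite direction: they set up the correspondence between line bundles on $Y$ and pairs $(E,\phi)$ on $X$ with $\phi\colon E\to E\otimes K_X$ having characteristic polynomial equal to the equation of $Y$, and then show, by a dimension count on $H^0(X,\mathscr{E}nd(E)\otimes K_X)$ versus the base of the Hitchin map, that a \emph{generic stable} $E$ admits such a $\phi$. This produces a stable bundle in the image without assuming one exists.

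Your step (iii) is left openly incomplete, and the borderline strata you flag are genuinely delicate; BNR handle the codimension-$2$ statement by a separate argument that again leans on the spectral correspondence rather than a raw Harder--Narasimhan count. If you want to make your outline into a proof, the missing ingredient in both (ii) and (iii) is the spectral/Higgs correspondence $\{\xi\text{ on }Y\}\leftrightarrow\{(E,\phi)\text{ on }X\}$, which is the real content of the BNR paper.
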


From the construction of $Y$, it follows that: $$\pi_*\mathscr{O}_Y \cong \mathscr{O}_X\oplus K_X^{-1}\oplus\dots\oplus K_X^{-N+1},$$ so we may compute $\delta = D + N(N-1)(g_X-1)$ and $g_Y = N^2(g_X-1)+1$. 

Let $S\subset \mathscr{U}_X(N, D)$ be the image of $\pi_*$. If $\mathscr{E} = \pi_*\xi$ is in $S,$ then so is $\mathscr{E}\otimes\mathscr{L}$ for all $\mathscr{L}\in\operatorname{Pic}^0(X)$, since by the projection formula $\mathscr{E}\otimes\mathscr{L} = \pi_*\left(\xi\otimes\pi^*\mathscr{L}\right).$ Therefore, the restriction of $S$ to the subspace $\mathscr{SU}_X(n, \Lambda)\subset\mathscr{U}_X(n, d)$ consisting of vector bundles with a fixed determinant $\Lambda$ is also Zariski open. This tells us that if we choose a holomorphic connection $B\in\mathscr{A}(E)$ generically, giving $E$ a holomorphic structure $\mathscr{E},$ i.e. so that $\mathscr{E}$ lives in the image of the above rational map, there exists a line bundle $\xi\to Y$ such that $\pi_*\xi = \mathscr{E}.$

For $\mathscr{L}\to X$ a holomorphic line bundle, the projection formula gives us an isomorphism $T:H^0_X(\mathscr{E}\otimes \mathscr{L}) \xrightarrow\cong H^0_Y(\xi\otimes\pi^*\mathscr{L}).$ Recall that the moduli space $\mathcal{N}^d_X(\mathscr{E})$ is the space of pairs $(\mathscr{L},\alpha)$ where $\mathscr{L}\in\text{Pic}^d_X$ and $\alpha\in\mathbb{P}H^0_X(\mathscr{E}\otimes \mathscr{L}).$ Clearly this is homeomorphic to the space of pairs $(\mathscr{L}, \beta)$ where $L\in\text{Pic}^d_X$ and $\beta\in\mathbb{P}H^0_Y(\xi\otimes \pi^*\mathscr{L}).$ But, the space $\mathbb{P}H^0_Y(\xi\otimes \pi^*\mathscr{L})$ is also isomorphic to the space of effective divisors on $Y$ in the linear equivalence class of $\xi\otimes \pi^*\mathscr{L}.$

The degree of $\xi\otimes \pi^*\mathscr{L}$ in $Y$ is $\deg \xi\otimes \pi^*L = \delta+Nd.$ So, the following is a fiber square, where $\mu_Y$ is the Abel-Jacobi map for $Y.$

\[\begin{tikzcd}
	{\mathcal{N}^d_X(\mathscr{E})} &&&& {\text{Sym}^{\delta+Nd}(Y)} \\
	\\
	{\text{Pic}_X^d} &&&& {\text{Pic}_Y^{\delta+Nd}}
	\arrow["{[\mathscr{L}, \alpha]\mapsto\text{Div}(T(\alpha))}", from=1-1, to=1-5]
	\arrow["{[\mathscr{L}, \alpha]\mapsto \mathscr{L}}"', from=1-1, to=3-1]
	\arrow["{\mu_Y}"', from=1-5, to=3-5]
	\arrow["{\mathscr{L}\mapsto \xi\otimes\pi^*\mathscr{L}}", from=3-1, to=3-5]
\end{tikzcd}\]

Since the map $\mathscr{L}\mapsto \xi\otimes\pi^*\mathscr{L}$ is injective, $\mathcal{N}^d_X(\mathscr{E})$ is embedded in $\Sym^{\delta+Nd}(Y)$ as $\mu_Y^{-1}(\xi\otimes\pi_*\text{Pic}^d_X).$ This tells us:

\begin{proposition}\label{projective}
For generic $\mathscr{E},$ $\mathcal{N}^d_X(\mathscr{E})$ is a complex projective manifold.
\end{proposition}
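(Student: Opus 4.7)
The plan is to exploit the fiber-square description established immediately before the proposition: we have identified $\mathcal{N}^d_X(\mathscr{E})$ with $\mu_Y^{-1}(\xi\otimes \pi^*\mathrm{Pic}^d_X) \subset \mathrm{Sym}^{\delta+Nd}(Y)$, and this is already an algebro-geometric description. So the work is really to interpret each ingredient as a morphism of complex projective varieties and then invoke closedness.

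First I would note that $\mathrm{Sym}^{\delta+Nd}(Y)$, as the symmetric product of a smooth projective curve, is a smooth complex projective variety, and that the Abel--Jacobi map $\mu_Y:\mathrm{Sym}^{\delta+Nd}(Y)\to\mathrm{Pic}^{\delta+Nd}_Y$ is a morphism of projective varieties. Next I would verify that $\xi\otimes\pi^*\mathrm{Pic}^d_X$ is a closed subvariety of $\mathrm{Pic}^{\delta+Nd}_Y$: the map $\mathscr{L}\mapsto\xi\otimes\pi^*\mathscr{L}$ is the composition of the pullback homomorphism $\pi^*:\mathrm{Pic}^0_X\to\mathrm{Pic}^0_Y$ (a morphism of abelian varieties) with a translation, and its image is therefore a translate of an abelian subvariety, hence Zariski-closed. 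The injectivity of this map, already observed in the preceding discussion, ensures the image is a copy of $\mathrm{Pic}^d_X$.

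Combining these, $\mu_Y^{-1}(\xi\otimes\pi^*\mathrm{Pic}^d_X)$ is the preimage of a Zariski-closed subset under a morphism of projective varieties, so it is a closed subvariety of $\mathrm{Sym}^{\delta+Nd}(Y)$. Being a closed subvariety of a projective variety, it is itself projective. Corollary \ref{manifold} already shows that $\mathcal{N}^d_X(\mathscr{E})$ is smooth as a real manifold; since its embedding into the smooth projective variety $\mathrm{Sym}^{\delta+Nd}(Y)$ is holomorphic and the image is smooth, it inherits the structure of a complex submanifold, and the almost complex structure of Corollary \ref{manifold} agrees with this integrable one. Hence $\mathcal{N}^d_X(\mathscr{E})$ is a complex projective manifold.

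I do not expect any real obstacle here: the proposition is essentially a corollary of the fiber square already constructed, together with the fact that symmetric products of curves and Jacobians are projective and that $\pi^*$ on Picard groups is a morphism of abelian varieties. The only point that deserves a sentence of justification is the closedness of $\xi\otimes\pi^*\mathrm{Pic}^d_X$ inside $\mathrm{Pic}^{\delta+Nd}_Y$, and the compatibility of the smooth structure from Corollary \ref{manifold} with the complex-analytic structure coming from the embedding; both are routine.
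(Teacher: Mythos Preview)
Your proposal is correct and follows exactly the same route as the paper: the proposition is stated as an immediate consequence of the embedding $\mathcal{N}^d_X(\mathscr{E})\hookrightarrow \mathrm{Sym}^{\delta+Nd}(Y)$ as $\mu_Y^{-1}(\xi\otimes\pi^*\mathrm{Pic}^d_X)$, together with the smoothness from Corollary~\ref{manifold}. In fact you supply more detail than the paper does---the paper simply writes ``This tells us:'' and states the proposition, leaving the closedness of $\xi\otimes\pi^*\mathrm{Pic}^d_X$ and the compatibility of complex structures implicit.
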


\subsection{Degeneracy locus construction}

In the previous section, we used techniques from index theory to embed $\mathcal{N}$ in a finite-dimensional projective space bundle over the Jacobian $\mathscr{J}_X.$ We will now produce a potentially different embedding of $\mathcal{N}$ in a finite-dimensional projective space bundle over the Jacobian, using algebraic geometry. This new perspective will prove useful in computing the cohomology of $\mathcal{N}.$ 

Specifically, we can exhibit $q(\mathcal{N}^d)$ as a degeneracy locus of a map of vector bundles over the Jacobian $\mathscr{J}_X$, using a strategy described in \cite{Kleiman-Laksov} and \cite{Narasimhan-Ramanan_2theta}. To fix notation, define the \emph{degeneracy locus} of a map of vector bundles $\sigma:U\to V$ over a manifold $M$ to be: $$D_k(\sigma) := \{x\in M | \rank(\sigma_x) \leq k\}.$$ Let $Z$ be the divisor $x_1+\dots+x_K$ on $\Sigma,$ for $K$ large, and consider the short exact sequence of sheaves: $$0\to\mathscr{O}(-Z)\to\mathscr{O}_X\to\mathscr{O}_Z\to0.$$ For any holomorphic line bundle $\mathscr{L}\to X,$ we may tensor this sequence with $\mathscr{L}\otimes\mathscr{E}.$

Let $K$ be so large that $H^0(\mathscr{L}\otimes\mathscr{E}\otimes\mathscr{O}(-Z)) = 0.$ Then, the long exact sequence associated to the above sheaf sequence tells us that $H^0_\Sigma(\mathscr{L}\otimes\mathscr{E})$ is isomorphic to the kernel of the connecting homomorphism: $$\sigma_\mathscr{L}:\oplus_{i=1}^K(\mathscr{L}\otimes\mathscr{E})_{x_i}\to H^1(\mathscr{L}\otimes\mathscr{E}\otimes \mathscr{O}(-Z)).$$ As $L$ varies over $\text{Pic}_X^{d},$ each of these quantities above becomes the fiber of a vector bundle over $\text{Pic}_X^{d}$. To construct these vector bundles, let $\mathscr{P}_d\to \text{Pic}^d_X\times X$ be the Poincar\'e bundle parameterizing degree $d$ line bundles and $p:\text{Pic}^d_X\times X\to\text{Pic}^d_X$ the projection map. Define the following sheaves over $\text{Pic}^d_X$: $$V_X(\mathscr{E}) = p_*(\mathscr{E}\otimes\mathscr{P}_d\otimes\mathscr{O}_Z), ~~W_X(\mathscr{E}) = R^1p_*(\mathscr{E}\otimes\mathscr{P}_d\otimes\mathscr{O}(-Z)).$$ Then, for $K$ sufficiently large, $\pi_V:V_X(\mathscr{E})\to\text{Pic}_X^{d}$ is the vector bundle whose fiber over $\mathscr{L}$ is $\oplus_{i=1}^K(\mathscr{L}\otimes\mathscr{E})_{x_i}$ and $\pi_W:W_X(\mathscr{E})\to\text{Pic}_X^{d}$ is the vector bundle whose fiber over $\mathscr{L}$ is $H^1(\mathscr{L}\otimes\mathscr{E}\otimes \mathscr{O}(-Z)).$ 

These maps $\sigma_\mathscr{L}:V_X(\mathscr{E})_\mathscr{L}\to W_X(\mathscr{E})_\mathscr{L}$ glue together to a vector bundle homomorphism $\sigma:V_X(\mathscr{E})\to W_X(\mathscr{E})$. The image $q(\mathcal{N}^d)$ is the degeneracy locus $D_{NK-1}(\sigma)$ of this map. 

\subsection{Rational cohomology of $\mathcal{N}$}
This section is devoted to computing the Poincar\'e polynomials of the moduli spaces $\mathcal{N}^d_X(\mathscr{E}),$ making use of the degeneracy locus construction and spectral curves. Following \cite{FL}, we consider the total space of the projective bundle $\pi_V:\mathbb{P}V_X(\mathscr{E})\to\text{Pic}^d_X$. The preimage $\pi_V^{-1}(D_{KN-1}(\sigma))\subset \mathbb{P}V_X(\mathscr{E})$, which consists of pairs $(\mathscr{L}, \alpha\in \mathbb{P}H^0_X(\mathscr{L}\otimes\mathscr{E}))$, is homeomorphic to $\mathcal{N}.$ We will need to recall Hartshorne's notion of ampleness for vector bundles \cite{Hartshorne}: a holomorphic vector bundle $\eta\to M$ over a variety $M$ is \emph{ample} if the tautological line bundle $\mathscr{O}_{\mathbb{P}\eta^*}(1)\to\mathbb{P}\eta^*$ is ample over the total space of the projectivized dual bundle. The following theorem of Fulton and Lazarsfeld will be our main tool:

\begin{theorem}[{\cite[Theorem II and Remark 1.9]{FL}}]\label{connectedness}
Let $\mathscr{V}, \mathscr{W}$ be vector bundles of dimensions $v, w$ respectively over a smooth irreducible complex projective variety $M,$ with $\sigma:\mathscr{V}\to\mathscr{W}$ a map of vector bundles. Let $\pi_\mathscr{V}:\mathbb{P}\mathscr{V}\to M$ be the projection. If $\Hom(\mathscr{V}, \mathscr{W})$ is ample, then the relative homology groups $H_i(\mathbb{P}\mathscr{V}, \pi_\mathscr{V}^{-1}(D_{k}(\sigma)); \mathbb{Z})$ vanish whenever $i \leq \dim_\mathbb{C}M - (v-k)(w-k).$
\end{theorem}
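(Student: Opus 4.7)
The strategy I would pursue is to reduce the statement to a Lefschetz-type vanishing for the zero locus of a section of an ample vector bundle, in the spirit of Sommese and Lazarsfeld. Concretely, the task is to realize $\pi_{\mathscr V}^{-1}(D_k(\sigma))$, or a closely related auxiliary space, as such a zero locus of expected codimension $(v-k)(w-k)$, and then invoke the general vanishing $H_i(Y, Z(s); \mathbb{Z}) = 0$ for $i \leq \dim_\mathbb{C} Y - \rank \mathscr{F}$ that holds whenever $s \in H^0(Y, \mathscr{F})$ with $\mathscr{F}$ an ample vector bundle on a smooth projective $Y$.

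First, I would replace $\mathbb{P}\mathscr{V}$ by the Grassmann bundle $\rho : G = \operatorname{Gr}_{v-k}(\mathscr{V}) \to M$ with tautological subbundle $\mathscr{S} \subset \rho^* \mathscr{V}$ of rank $v-k$. The composition $\mathscr{S} \hookrightarrow \rho^* \mathscr{V} \xrightarrow{\rho^* \sigma} \rho^* \mathscr{W}$ is a global section $\tau$ of $\mathscr{F} := \mathscr{S}^* \otimes \rho^* \mathscr{W}$ on $G$, of rank $\rank \mathscr{F} = (v-k)w$. Its zero locus $Z(\tau) = \{(x, S) : S \subseteq \Ker \sigma_x\}$ projects onto $D_k(\sigma)$, with fiber a single point over the open stratum $\{\rank \sigma_x = k\}$ and higher-dimensional Grassmannian fibers over deeper degeneracy strata. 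Assuming the ample-bundle vanishing theorem applies, one obtains $H_i(G, Z(\tau); \mathbb{Z}) = 0$ for $i \leq \dim G - (v-k)w = \dim_\mathbb{C} M - (v-k)(w-k)$, which is precisely the claimed range.

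I would then transfer the vanishing from $(G, Z(\tau))$ back to $(\mathbb{P}\mathscr{V}, \pi_{\mathscr{V}}^{-1}(D_k(\sigma)))$. Both spaces fiber over $M$, with connected fibers whose cohomology is concentrated in even degrees and is generated over $H^*(M)$ by tautological Chern classes; comparing Leray--Serre spectral sequences for the total spaces and for the preimages of $D_k(\sigma)$ should deduce the desired relative vanishing on $\mathbb{P}\mathscr{V}$ from the one on $G$.

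The main obstacle is verifying that $\mathscr{F} = \mathscr{S}^* \otimes \rho^* \mathscr{W}$ is ample on $G$ given the ampleness of $\Hom(\mathscr{V}, \mathscr{W}) = \mathscr{V}^* \otimes \mathscr{W}$ on $M$. The tautological surjection $\rho^* \mathscr{V}^* \twoheadrightarrow \mathscr{S}^*$ yields a surjection $\rho^*(\mathscr{V}^* \otimes \mathscr{W}) \twoheadrightarrow \mathscr{F}$, and ampleness does descend to quotients, but ampleness need not pull back along the non-finite map $\rho$. Remedying this requires combining the pullback with a positive twist by a bundle that is relatively ample along the fibers of $\rho$, then checking that the twisted section still cuts out $Z(\tau)$ scheme-theoretically. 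Threading this is essentially the content of the Fulton--Lazarsfeld argument in \cite{FL}, and is the step I would expect to consume the bulk of the effort.
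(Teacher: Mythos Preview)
The paper does not supply its own proof of this statement: Theorem~\ref{connectedness} is quoted verbatim from Fulton--Lazarsfeld \cite[Theorem~II and Remark~1.9]{FL} and is used as a black box in the proof of Theorem~\ref{cohomology}. There is therefore nothing in the paper to compare your argument against.

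That said, your sketch is a faithful outline of the Fulton--Lazarsfeld strategy. Two remarks. First, in the only case the paper actually uses, namely $k=v-1$ (here $v=NK$ and $k=NK-1$), your Grassmann bundle $\operatorname{Gr}_{v-k}(\mathscr{V})$ is $\operatorname{Gr}_1(\mathscr{V})=\mathbb{P}\mathscr{V}$ itself, so the ``transfer'' step you worry about is vacuous and $Z(\tau)$ is precisely the space $\mathcal{N}$ sitting inside $\mathbb{P}V_X(\mathscr{E})$. Second, your identification of the genuine difficulty---that ampleness of $\mathscr{V}^*\otimes\mathscr{W}$ on $M$ does not immediately give ampleness of $\mathscr{S}^*\otimes\rho^*\mathscr{W}$ on the Grassmann bundle---is exactly right, and your proposed remedy (combining the quotient $\rho^*(\mathscr{V}^*\otimes\mathscr{W})\twoheadrightarrow\mathscr{S}^*\otimes\rho^*\mathscr{W}$ with relative positivity along the fibers of $\rho$) is essentially what Fulton and Lazarsfeld do, via an argument on $\mathbb{P}((\mathscr{S}^*\otimes\rho^*\mathscr{W})^*)$ that unwinds the definition of ampleness for vector bundles. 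So your proposal is correct in spirit; it is simply a sketch of the cited result rather than an alternative to anything in the present paper.
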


In order to use this theorem, we must verify that $\Hom(V_X(\mathscr{E}), W_X(\mathscr{E}))\to\text{Pic}^d_X$ is ample, which will require spectral curves.

\begin{lemma}\label{ample}
    For generic $\mathscr{E},$ the bundle $\Hom(V_X(\mathscr{E}), W_X(\mathscr{E}))\to\operatorname{Pic}^d_X$ is ample.
\end{lemma}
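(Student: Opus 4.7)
The plan is to reduce the rank-$N$ ampleness statement on $\operatorname{Pic}^d_X$ to a classical rank-one ampleness statement on the Jacobian of a spectral curve, and then pull it back along a finite morphism. First, using Section 3.1, I would write $\mathscr{E} = \pi_*\xi$ for a line bundle $\xi$ on a smooth spectral cover $\pi: Y \to X$, and choose the auxiliary points $x_1, \dots, x_K$ disjoint from the branch locus of $\pi$, so that $Z' := \pi^{-1}(Z) \subset Y$ is a reduced effective divisor of degree $NK$. Consider
\[
j: \operatorname{Pic}^d_X \to \operatorname{Pic}^{\delta + Nd}_Y, \qquad \mathscr{L} \mapsto \xi \otimes \pi^*\mathscr{L}.
\]
Because $\operatorname{Nm} \circ \pi^* = [N]$ on $\operatorname{Pic}^0_X$, the kernel of $\pi^*$ is contained in the $N$-torsion and is therefore finite; hence $j$ is a finite closed embedding of projective varieties.

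Next, the projection formula yields natural isomorphisms $(\mathscr{L}\otimes\mathscr{E})\otimes\mathscr{O}_Z \cong \pi_*((\xi\otimes\pi^*\mathscr{L})\otimes\mathscr{O}_{Z'})$ and $(\mathscr{L}\otimes\mathscr{E})\otimes\mathscr{O}_X(-Z)\cong\pi_*(\xi\otimes\pi^*\mathscr{L}\otimes\mathscr{O}_Y(-Z'))$. Since $\pi$ is finite and affine, $\pi_*$ is exact and commutes with $R^i p_*$, so if $V_Y$ and $W_Y$ denote the analogous bundles built on $\operatorname{Pic}^{\delta+Nd}_Y$ from the Poincar\'e line bundle of $Y$ and the divisor $Z'$, this identifies $V_X(\mathscr{E}) \cong j^* V_Y$ and $W_X(\mathscr{E}) \cong j^* W_Y$, and consequently
\[
\Hom(V_X(\mathscr{E}), W_X(\mathscr{E})) \cong j^* \Hom(V_Y, W_Y).
\]

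Finally, $\Hom(V_Y, W_Y)$ is precisely the Hom bundle that appears in classical Brill--Noether theory on the single curve $Y$, and for $K$ sufficiently large that $W_Y$ is a genuine vector bundle, its ampleness is established in the course of the Fulton--Lazarsfeld proof of connectedness of classical Brill--Noether loci \cite{FL}. Since ampleness of vector bundles is preserved under pullback by finite morphisms of projective varieties \cite{Hartshorne}, it follows that $j^*\Hom(V_Y, W_Y)$ is ample on $\operatorname{Pic}^d_X$, as required. The main obstacle is the middle step: carefully promoting the projection-formula identifications from a fiberwise statement to an isomorphism of vector bundles on $\operatorname{Pic}^d_X$, by tracking the interplay of the Poincar\'e bundles on $X$ and $Y$ across $j$. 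Once that identification is in place, both the input ampleness on $\operatorname{Pic}^{\delta+Nd}_Y$ and its descent along $j$ are standard.
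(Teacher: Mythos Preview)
Your proposal is correct and follows essentially the same route as the paper: both use the spectral curve to write $\mathscr{E}=\pi_*\xi$, identify the bundles on $\operatorname{Pic}^d_X$ with pullbacks along $\pi^*$ (resp.\ your $j$) of the corresponding rank-one constructions on the Jacobian of $Y$ via projection formula and base change, invoke \cite{FL} for ampleness on $\operatorname{Pic}_Y$, and then use \cite{Hartshorne} to descend along the closed embedding. The only cosmetic difference is that the paper first reduces to showing $W_X(\mathscr{E})$ alone is ample (using that $V_X(\mathscr{E})$ is algebraically equivalent to the trivial bundle, as in \cite[pp.~280--281]{FL}) and then proves $W_X(\mathscr{E})\cong(\pi^*)^*W_Y(\xi)$, whereas you pull back the full $\Hom(V_Y,W_Y)$; your flagged ``main obstacle'' is exactly the base-change diagram chase the paper carries out.
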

\begin{proof}
For each $m\in\mathbb{Z}$ and line bundle $\xi\to Y$ we can form the vector bundle $W_Y(\xi)\to\text{Pic}^{m}_Y$ as in the degeneracy locus construction above, where instead of the divisor $Z\subset X$ we use $\pi^*Z\subset Y.$ For a curve $C,$ let $\mathscr{P}_m(C)\to \text{Pic}^m_C\times C$ denote the Poincar\'e bundle parameterizing degree $m$ line bundles on $C.$

The bundle $V_X(\mathscr{E})\to\text{Pic}^d_X$ is algebraically equivalent to the trivial bundle, so it suffices to show that the bundle $W_X(\mathscr{E})$ is ample (see \cite[pp. 280-281]{FL}.) By the results of Section 3.1, a generic choice of $\mathscr{E}$ can be written as $\mathscr{E} = \pi_*\xi$ for some holomorphic line bundle $\xi\to Y$. I claim that the bundle $W_X(\mathscr{E})$ is isomorphic to the restriction to a copy $\text{Pic}_X^d\subset \text{Pic}_Y^{\delta+Nd}$ of $W_Y(\xi)$. To verify this claim, consider the following diagram (where all the vertical arrows are projections onto one of the factors):
\[\begin{tikzcd}
	Y && Y && X \\
	\\
	{\text{Pic}^{Nd}_Y\times Y} && {\text{Pic}^d_X\times Y} && {\text{Pic}^d_X\times X} \\
	\\
	{\text{Pic}^{Nd}_Y} && {\text{Pic}^d_X} && {\text{Pic}^d_X}
	\arrow["{\text{id}}"', from=1-3, to=1-1]
	\arrow["\pi", from=1-3, to=1-5]
	\arrow["r"', from=3-1, to=1-1]
	\arrow["p", from=3-1, to=5-1]
	\arrow["{r_Y}"', from=3-3, to=1-3]
	\arrow["{\pi^*\times\text{id}}"', from=3-3, to=3-1]
	\arrow["{\text{id}\times\pi}", from=3-3, to=3-5]
	\arrow["{p_Y}", from=3-3, to=5-3]
	\arrow["{r_X}"', from=3-5, to=1-5]
	\arrow["{p_X}", from=3-5, to=5-5]
	\arrow["{\pi^*}"', from=5-3, to=5-1]
	\arrow["{\text{id}}", from=5-3, to=5-5]
\end{tikzcd}\]

Since $\pi$ and the vertical maps are flat, we have the following base change isomorphism: $r_X^*\mathscr{E} =r_X^*\pi_*\xi ={(\text{id}\times\pi)}_*r_Y^*\xi.$ Using this and the projection formula, we have: 
    \begin{align*}
        W_X(\mathscr{E}) &= {R^1p_X}_*\left(r_X^*\mathscr{E}\otimes r_X^*\mathscr{O}(-Z)\otimes \mathscr{P}_d(X)\right) \\
        &= {R^1p_X}_*({(\text{id}\times\pi)}_*r_Y^*\xi\otimes r_X^*\mathscr{O}(-Z)\otimes \mathscr{P}_d(X)) \\
        &= {R^1p_X}_*{(\text{id}\times\pi)}_*(r_Y^*\xi\otimes r_Y^*\mathscr{O}(-\pi^*Z)\otimes {(\text{id}\times\pi)}^*\mathscr{P}_d(X)). \\
    \intertext{The finiteness of the map $\text{id}\times\pi:\text{Pic}^d_X\times Y\to\text{Pic}^d_X\times X$ guarantees that $${R^1p_X}_*\circ{(\text{id}\times\pi)}_* = R^1(p_X\circ(\text{id}\times\pi))_* = R^1{p_Y}_*,$$ giving us:}
        W_X(\mathscr{E}) &= {R^1p_Y}_*(r_Y^*\xi\otimes r_Y^*\mathscr{O}(-\pi^*Z)\otimes {(\text{id}\times\pi)}^*\mathscr{P}_d(X)). \\
    \intertext{The isomorphism $(\pi^*\times\text{id})^*\mathscr{P}_{Nd}(Y)\cong (\text{id}\times\pi)^*\mathscr{P}_d(X)$ of vector bundles on $\text{Pic}^d_X\times Y$ and the projection formula let us compute:}
    W_X(\mathscr{E}) &= {R^1p_Y}_*(r_Y^*\xi\otimes r_Y^*\mathscr{O}(-\pi^*Z)\otimes (\pi^*\times\text{id})^*\mathscr{P}_{Nd}(Y)) \\
        &= {R^1p_Y}_*(\pi^*\times\text{id})^*(r^*\xi\otimes r^*\mathscr{O}(-\pi^*Z)\otimes \mathscr{P}_{Nd}(Y)). \\
    \intertext{Finally, the base change isomorphism ${R^1p_Y}_*(\pi^*\times\text{id})^* = \pi^*{R^1p}_*$ gives:}
    W_X(\mathscr{E}) &= \pi^*{R^1p}_*(r^*\xi\otimes r^*\mathscr{O}(-\pi^*Z)\otimes \mathscr{P}_{Nd}(Y)) \\
        &= \pi^*W_Y(\xi). \\
    \end{align*} For sufficiently large $K,$ the bundle $W_Y(\xi)\to\text{Pic}^{\delta+ND}_Y$ is ample by \cite[Lemma 2.2]{FL}. Since the restriction of an ample vector bundle to a closed subvariety is ample \cite[Proposition 4.1]{Hartshorne}, $W_X(\mathscr{E})$ is ample, as desired.
\end{proof}

\begin{proof}[Proof of Theorem \ref{cohomology}]
By Lemma \ref{ample}, Theorem \ref{connectedness} applies to our degeneracy locus construction, so the relative homology groups $H_i(\mathbb{P}V_X(\mathscr{E}), \mathcal{N}; \mathbb{Z})$ vanish whenever $i \leq \dim_\mathbb{C}\mathcal{N}.$ The long exact sequence of a pair therefore tells us that: \begin{equation}\label{les}H_i(\mathcal{N};\mathbb{Z})\cong H_i(\mathbb{P}V_X(\mathscr{E});\mathbb{Z})~~\text{when}~~i < \dim_\mathbb{C}\mathcal{N}.\end{equation} This fact plus Poincar\'e duality and our computation of $\chi(\mathcal{N})$ determines the rational cohomology of $\mathcal{N}.$ Indeed, the Poincar\'e polynomial of $\mathbb{P}V_X(\mathscr{E})$ may be computed using the Leray-Hirsch theorem: $$P_t(\mathbb{P}V_X(\mathscr{E})) = (1+t^2+\dots+t^{2(R+K-1)})(1+t)^{2g} = \sum_{i=0}^{2g}\sum_{j=0}^{R+K-1}{{2g}\choose{i}}t^{i+2j}.$$ Then, using the definition of the function $\Lambda$ from Equation \ref{Lambda}, we recover the formula from Theorem \ref{cohomology}: $$\dim H^r(\mathcal{N};\mathbb{Q}) = \begin{cases}
    \Lambda(g, r) & \text{if}~r<\dim_\mathbb{C}\mathcal{N} \\
    N^g{{2g-2}\choose{r}}-2\sum_{j=0}^r(-1)^{r-j}\Lambda(g,j) & \text{if}~r = \dim_\mathbb{C}\mathcal{N} \\
    \Lambda(g, \dim_\mathbb{R}\mathcal{N} - r) & \text{if}~\dim_\mathbb{C}\mathcal{N}<r\leq\dim_\mathbb{R}\mathcal{N} \\
    0 & \text{if}~\dim_\mathbb{R}\mathcal{N}<r \\
    \end{cases}$$
\end{proof}

\begin{remark}
    In principle the Hodge numbers of $\mathcal{N}$ are also computable. The functoriality of the Hodge decomposition combined with Equation \ref{les} show that $h^{p,q}(\mathcal{N}) = h^{p,q}(\mathbb{P}V_X(\mathscr{E}))$ whenever $p+q<\dim_\mathbb{C}\mathcal{N}.$ Since $V_X(\mathscr{E})$ is algebraically equivalent to the trivial bundle (see \cite{FL}), these Hodge numbers can be computed using the K\"unneth theorem for Dolbeault cohomology as in \cite{GH}: $$h^{p,q}(\mathbb{P}V_X(\mathscr{E})) = h^{p,q}\left(\mathbb{P}^{NK-1}\times\mathbb{C}^g/(\mathbb{Z}+i\mathbb{Z})^g\right) = \sum_{r}{{g}\choose{p-r}}{{g}\choose{q-r}}.$$ Thus, the remaining Hodge numbers, namely those with $p+q = \dim_\mathbb{C}\mathcal{N},$ are determined by the holomorphic Euler characteristics $\chi(\Omega^pT\mathcal{N})$ for $p\in\{0,1,\dots,\dim_\mathbb{C}\mathcal{N}\}.$ The Hirzebruch-Riemann-Roch theorem tells us we may compute these as: $$\chi(\Omega^p\mathcal{N}) = \int_\mathcal{N}\ch(\Omega^p\mathcal{N})\Td(T\mathcal{N}) = \int_{\mathbb{P}U}\xi^K\ch(\Omega^p\mathcal{N})\Td(T\mathcal{N}).$$ Although the Chern characters and Todd class above are determined algorithmically from the characteristic class computations of Section 2, a closed-form expression for these cohomology classes would likely be exceedingly complicated.
\end{remark}

\bibliographystyle{alpha}
\bibliography{second_draft}
\end{document}